\theoremstyle{plain}
\newtheorem*{thm*}{Theorem}
\newtheorem*{prop*}{Proposition}
\newtheorem*{rem*}{Remark}
\newtheorem{thm}{Theorem}[section]
\newtheorem{cor}[thm]{Corollary}
\newtheorem{prop}[thm]{Proposition}
\newtheorem{lm}[thm]{Lemma}
\newtheorem{obs*}{Observation}
\newtheorem{claim*}{Claim}
\numberwithin{equation}{thm}
\theoremstyle{remark}
\newtheorem{rem}[thm]{Remark}
\newcommand{\kk}{\Bbbk} 
\newcommand{\CC}{{\mathcal{C}}}
\newcommand{\D}{{\mathcal{D}}}
 \def\HIm{\ensuremath{ { \rm HIm}}}
  \def\Im{\ensuremath{{ \rm im}}}
\let\phi\varphi
\def\D{\ensuremath{\Delta}}
\def\ot{\ensuremath {{\otimes}}}
\let\epsilon\varepsilon
 \newcommand\Heq{\mathop{\hbox{\rm Heq}}\nolimits}
  \newcommand\Hker{\mathop{\hbox{\rm Hker}}\nolimits}
  \newcommand\Hcoeq{\mathop{\hbox{\rm Hcoeq}}\nolimits}
  \newcommand\Hcoker{\mathop{\hbox{\rm Hcoker}}\nolimits}
  \newcommand\heq{\mathop{\hbox{\rm heq}}\nolimits}
  \newcommand\hker{\mathop{\hbox{\rm hker}}\nolimits}
  \newcommand\hcoeq{\mathop{\hbox{\rm hcoeq}}\nolimits}
  \newcommand\hcoker{\mathop{\hbox{\rm hcoker}}\nolimits}
  \newcommand\id{\mathop{\hbox{\rm id}}\nolimits}
\let\D\Delta
\let\e\epsilon
\def\Pt{{\mathcal Pt}}
\def\Hfc{{\mathcal H^{\rm coco}}}
\def\Hfcc{{\mathcal H^{\rm co-coco}}}
\newcommand{\invamalg}{\mathbin{\text{\,\raise-0.3ex\hbox{\rotatebox[origin=c]{180}{$\amalg$}}}}}
 \let\act\rightharpoonup
\begin{document}

 \title{On some properties of the category of  cocommutative Hopf Algebras}

\author{Christine Vespa}\thanks{vespa@math.unistra.fr}
 \author{ Marc Wambst}\thanks{wambst@math.unistra.fr}
\address{ 
Institut de Recherche Math\'ematique Avanc\'ee\\ 
UMR 7501 de l'universit\'e de Strasbourg et du CNRS\\  
 7 rue Ren\'e-Descartes\\
  67084 Strasbourg Cedex, France 
}


\date{\today}

\begin{abstract}
By a recent work of Gran-Kadjo-Vercruysse, the category of cocommutative Hopf algebras over a field of characteristic zero is semi-abelian. In this paper, we explore some properties of this categoy, in particular we show that its abelian core  is the category of commutative and cocommutative Hopf algebras. 
 \medskip

 \textit{Mathematics Subject Classification:} 
 16T05,  18B99, 18E10.
\smallskip

\textit{Keywords}: Hopf algebra, abelian category, semi-abelian category
\end{abstract}
\maketitle

\section*{Introduction}
It is a classical result that the category of commutative and cocommutative Hopf algebras is an abelian category (see for example \cite[Corollary 4.16]{Take} or \cite[Theorem 4.3]{N}). It is also   known that this is no more the case for the category of cocommutative (resp. commutative) Hopf algebras since the coproduct and the product are not equivalent in each of these categories.
  
In 2002, the more general notion of semi-abelian category emerges in category theory \cite{Jane}.
In a semi-abelian category, classical diagram lemmas (five lemma, snake lemma…) are valid. Among the examples of semi-abelian category we have the categories of groups, ring without unit, Lie algebras (and more generally algebras over a reduced linear operad) and sheaves or presheaves of these. Abelian categories are also examples of semi-abelian categories. In fact, a category $\CC$ is abelian precisely when both $\CC$ and $\CC^{op}$ are semi-abelian. Since then, semi-abelian categories become widely-known as the good generalization of the category of groups just as abelian categories is the good generalization of the category of abelian groups. 
 
 A  category is semi-abelian if it has a zero object and finite products and is Barr-exact \cite{Barr} and protomodular in the sense of Bourn \cite{Bourn}. For more details on exact, protomodular and semi-abelian categories, we refer the reader to the excellent book by Borceux and Bourn \cite{BB}.

In \cite{GKV} Gran, Kadjo and Vercruysse prove the following theorem.
\begin{thm} \label{thm}
The category of cocommutative Hopf algebras over a field of  characteristic zero is semi-abelian. 
\end{thm}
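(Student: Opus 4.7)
The plan is to verify directly the four defining conditions of a semi-abelian category: pointedness, existence of finite products, Barr-exactness, and Bourn-protomodularity (see \cite{BB}). Pointedness and finite products are the tractable part. The ground field $\kk$, with its trivial Hopf algebra structure, is both an initial and a terminal object, since every cocommutative Hopf algebra $H$ admits a unique unit $\kk \to H$ and counit $H \to \kk$. Binary products are given by the tensor product $H \otimes K$ with projections $\mathrm{id}_H \otimes \epsilon_K$ and $\epsilon_H \otimes \mathrm{id}_K$; cocommutativity is stable under this tensor product. The kernel of a Hopf map $f \colon H \to K$ in the pointed-categorical sense is the Hopf subalgebra $\mathrm{HKer}(f) = \{h \in H : f(h) = \epsilon_H(h) \cdot 1_K\}$, i.e. the equalizer of $f$ and the zero morphism.

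For Bourn-protomodularity, I would verify the split short five lemma in $\Hfc$: in any morphism between two split short exact sequences whose outer vertical arrows are isomorphisms, the middle arrow must also be an isomorphism. Split short exact sequences of cocommutative Hopf algebras are classified by smash-product decompositions of the middle term, and this classification, combined with the analogous statement for groups and Lie $\kk$-algebras (both known to be semi-abelian), should give the desired conclusion. The key structural input here is the Cartier--Kostant--Milnor--Moore theorem, valid in characteristic zero, which decomposes each cocommutative Hopf algebra $H$ as the smash product of the group algebra $\kk[G(H)]$ of grouplike elements by the universal enveloping algebra $U(P(H))$ of primitive elements.

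The step I expect to be the main obstacle is Barr-exactness: one must show that every internal equivalence relation $R \rightrightarrows H$ in $\Hfc$ is effective, i.e.~arises as the kernel pair of some Hopf morphism. The natural candidate is the coequalizer $H \to H/I$ for an appropriate Hopf ideal $I$ induced by $R$, and the technical heart of the argument is to check that passing to this quotient recovers $R$ as its kernel pair. The difficulty is that the intertwining of grouplike and primitive components under the smash-product assembly obstructs a piecewise reduction to the (known) cases of groups and Lie algebras; one has to work with the full Hopf algebra structure throughout, using the CKMM decomposition as a computational lever. An alternative worth considering would be to exhibit $\Hfc$ as a reflective subcategory of a larger semi-abelian category from which the four axioms descend, but I am not aware of a clean ambient category for which this succeeds, so the direct verification outlined above appears the most realistic route.
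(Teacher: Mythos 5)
What you have written is a plan rather than a proof: the two conditions that carry all of the difficulty are precisely the ones you leave unestablished. For protomodularity you assert that split short exact sequences in $\Hfc$ are classified by smash-product decompositions and that this, ``combined with'' the group and Lie-algebra cases, ``should give'' the split short five lemma; neither half of this is an argument. The classification of split extensions (in the paper it is the equivalence $\Pt_Y\simeq Y\hbox{-}\Hfc$ of Lemma \ref{lm4}, via the Blattner--Cohen--Montgomery smash product) must itself be proved, and the reduction ``by analogy'' to groups and Lie algebras fails for exactly the reason you concede when discussing exactness: under the Cartier--Kostant--Milnor--Moore decomposition the grouplike and primitive parts of a morphism of split extensions interact, so the five lemma does not split into the two known cases. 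Barr-exactness you explicitly leave open (``the technical heart of the argument is to check\dots''), and it is the heart: you name the quotient $H\to H/I$ as candidate coequalizer but offer no mechanism for recovering the relation as its kernel pair. Two further concrete problems: semi-abelianness also requires finite coproducts, which you never construct (in $\Hfc$ the binary coproduct is a nontrivial free-product-type object, needed even to formulate the split short five lemma in the form of axiom A2); and your description of the kernel as $\{h\in H\mid f(h)=\epsilon_H(h)1_K\}$ is not the kernel in $\Hfc$: the equalizer of $f$ and the zero morphism in this category is $\Hker(f)=\{x\mid x_1\otimes f(x_2)=x\otimes 1\}$, which is strictly smaller in general (already for group algebras of a surjection of groups), and the distinction is essential for everything that follows.

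By contrast, the paper proceeds through the Hartl--Loiseau axioms A1--A4 with explicit constructions: (co)equalizers, (co)kernels, products and coproducts are written down; the Newman correspondence between sub-Hopf algebras and left ideals that are two-sided coideals yields Lemmas \ref{lm1}--\ref{lm2} (every surjective map of cocommutative Hopf algebras is a cokernel, every normal sub-Hopf algebra inclusion is a kernel), and these two facts are the engine behind axioms A2 and A4 --- they do the work that your outline leaves open; axiom A2 is then obtained by the explicit identification $(K\amalg Y)/U\cong K\# Y\cong X$; and characteristic zero is used only for axiom A3 (the pullback of a cokernel is a cokernel), quoted from Gran--Kadjo--Vercruysse, which is where Cartier--Milnor--Moore actually enters --- not in the protomodularity step, where you invoke it. To make your route work you would need, at a minimum, a genuine proof of the split short five lemma from the smash-product description of split extensions, and a substitute for the Newman correspondence (or for the quoted pullback theorem) to handle regularity and the effectiveness of equivalence relations.
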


 In this paper we compute the abelian core of this semi-abelian category ({\it i.e.} the subcategory of abelian objects) and obtain the following result:
 
 \begin{thm} \label{thm2}
 The abelian core of the category of cocommutative Hopf algebras over a field of characteristic zero is the category of commutative and cocommutative Hopf algebras.
 \end{thm}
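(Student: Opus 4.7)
The plan is to exploit the characterization that, in any pointed protomodular category, the abelian core consists precisely of the objects admitting an internal abelian group structure with respect to the categorical product. The first step is to identify that product in the category of cocommutative Hopf algebras: given two such objects $H$ and $K$, the tensor product $H \otimes K$ equipped with its componentwise Hopf algebra structure is the categorical product, with projections $\id_H \otimes \varepsilon_K$ and $\varepsilon_H \otimes \id_K$; the zero object is $\kk$ itself.

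For the easy inclusion, let $H$ be commutative and cocommutative. The multiplication $\mu_H \colon H \otimes H \to H$ is automatically a coalgebra morphism, and it is an algebra morphism precisely because $H$ is commutative; hence $\mu_H$ is a morphism of cocommutative Hopf algebras. Together with $\eta_H$ as neutral element and the antipode $S_H$ as inverse, this equips $H$ with an internal abelian group structure, so $H$ lies in the abelian core.

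For the converse, assume $H$ is cocommutative and admits an internal abelian group structure via a Hopf algebra morphism $m \colon H \otimes H \to H$ with $\eta_H$ as two-sided unit. Writing $a \star b := m(a \otimes b)$, the condition that $m$ is an algebra morphism with respect to the tensor algebra structure on $H \otimes H$ yields the interchange identity
\[
(ac) \star (bd) = (a \star b)\cdot(c \star d)
\]
for all $a,b,c,d \in H$. Specializing $b = c = 1$ and using that $1$ is the unit for $\star$ gives $a \star d = a\cdot d$, so $m = \mu_H$; specializing instead $a = d = 1$ gives $c \star b = b \cdot c$, and therefore $b\cdot c = c\cdot b$. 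Thus $H$ is commutative.

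Combining the two inclusions proves the theorem. The key conceptual ingredient is the Eckmann--Hilton argument in the last paragraph, which is short and purely algebraic. The step requiring the most care is the identification of the categorical product in cocommutative Hopf algebras as the tensor product, so that internal abelian group structures genuinely correspond to Hopf algebra morphisms $H \otimes H \to H$ having $\eta_H$ as two-sided unit.
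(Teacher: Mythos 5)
Your proof is correct, but it follows a genuinely different route from the paper. The paper invokes Bourn's criterion that an object $C$ of a semi-abelian category is abelian exactly when the diagonal $C\to C\ot C$ (here the comultiplication $\Delta_C$) is a normal monomorphism; it then needs its Lemma 7.1 (normal monomorphisms in $\Hfc$ correspond to normal sub-Hopf algebras, which rests on the Newman correspondence via Lemma 3.4) and finishes with a Sweedler-notation computation extracting $ac=ca$ from normality of $\Im(\Delta_C)$ in $C\ot C$. You instead take the characterization of abelian objects as those carrying an internal abelian group structure for the categorical product (which is indeed $\ot$, with the unit of any such structure forced to be $\eta_H$ since $\kk$ is the zero object), and then run the Eckmann--Hilton interchange argument: an algebra map $m:H\ot H\to H$ with $1$ as two-sided unit must equal $\mu_H$ and forces commutativity, while conversely for $H$ commutative and cocommutative $\mu_H$ is a Hopf algebra map and, with $S_H$ as inverse (the inverse axiom being exactly $x_1S(x_2)=\e(x)1$), gives the internal abelian group structure. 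Your argument is more elementary and bypasses the Newman machinery entirely; its only external input is the equivalence ``abelian object $\Leftrightarrow$ internal abelian group object'' in a semi-abelian (pointed protomodular, finitely complete) category, which is standard--essentially the definition in Borceux--Bourn--but should be stated with a precise reference, just as the paper cites Bourn for the normal-diagonal criterion. The paper's approach, on the other hand, exercises the normality lemmas it develops earlier and keeps the whole argument within its Newman-correspondence framework; both proofs ultimately reduce the categorical condition to commutativity of $H$ by a short Hopf-algebra calculation.
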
 
 
   Roughly speaking the abelian core of a semi-abelian category $\CC$ is the biggest abelian subcategory of $\CC$. In particular, we recover as a corollary of this theorem the fact that the category of commutative and cocommutative Hopf algebras is abelian. It also proves that the category of cocommutative Hopf algebras is not abelian.

 In this paper,  we will follow the   
  characterization of semi-abelian categories  
   given by Hartl and Loiseau in  \cite{HL}. Namely, a  category $\mathcal C$ is semi-abelian if and only if the following  four axioms are satisfied.
 
 \begin{enumerate}[{A}1.]
 \item    The category $\mathcal C$ is pointed, finitely complete and  finitely cocomplete.
  \item    For any split epimorphism $p : X \rightarrow Y$ with section $s : Y \rightarrow X$ and with kernel $\kappa: K\hookrightarrow X $, the arrow $<\kappa, s>: K\amalg Y\rightarrow X$ is a cokernel.
  \item The pullback of a cokernel  is a cokernel.
  \item The image of a kernel by a cokernel is a kernel.
 \end{enumerate}

  In the first sections of this  work  we will prove axioms $A1$, $A2$ and $A3$.  The proofs of axioms $A2$ and $A3$ are heavily based on a result of Newman \cite{N} (see also \cite{Masuoka}) giving a natural correspondence between left ideal which are also two-sided coideals of a cocommutative Hopf algebra and its sub-Hopf algebras.
Axiom $A4$ corresponding to \cite[Theorem 3.7]{GKV}, one recovers Theorem \ref{thm}.
The last section is devoted to the proof of Theorem~\ref{thm2}.

 The category of commutative Hopf algebras is not semi-abelian. However, we will prove in \cite{BVW} that the opposite of this category is semi-abelian.
  
  \textbf{Acknowledgement} : The authors are very grateful to Dominique Bourn for his interest in this work, for very fruitful discussions and for his many helpful comments. They also thank Dominique Bourn and Marino Gran for both pointing out mistakes in previous versions of this paper. More subtile properties of categories of Hopf algebras will be study in the forthcoming paper \cite{BVW}, which will surpass this one.

\section{Conventions and prerequisites.}

In the whole article,  $\kk$ is a commutative field.  By {\it module} we will understand   module over $\kk$.  
 The unadorned symbol $\ot$ between two $\kk$-modules will   stand  for $\ot _\kk$.     
We denote  by $\Hfc$ the category of cocommutative Hopf algebras over $\kk$ and by
  $\Hfcc$ the category of commutative and cocommutative Hopf algebras over $\kk$.

Let $H$ be a  Hopf algebra. Its structure maps will be denoted as follows:  multiplication $\mu_H:H\ot H \to H$, comultiplication $\Delta_H:H\to H \ot H$, unit $\eta_H: \kk \to H$,    counit $\e_H :H\to \kk$ and antipode $S_H:H\to H$. 
Moreover, for any $a,b\in H$, we will denote $ \mu_H(a\ot b)$ by $ab$. The unity $\eta_H(1)$ will be denoted by $1_A$ or simply $1$. We also adopt the Sweedler-Heyneman  notation   $\D_H(a)= a_1\ot a_2$.  More generally,  a generic element in a tensor product of $\kk$-modules $A\ot B$, will be denoted by $a\ot b$, the summation sign being omitted.

  We will call {\it Hopf ideal} of a Hopf algebra $H$ any two-sided ideal $I$ of the algebra $H$   which is also a two-sided coideal of the coalgebra $H$   ({\it i.e. }$ 
 \D_H(I)\subset I\ot H+H\ot I
$ and $\e_H(I)= 0$)  such, moreover, one has $S_H(I)\subset I$. In particular,  the   structure on $H$ induces a Hopf algebra structure on the quotient $H/I$.

 A   sub-Hopf algebra  $A$ of a Hopf algebra $H$ will be called {\it  normal  } if, for any $a\in A$ and $y\in H$, one has $y_1aS(y_2)\in A$. In particular, when $H$ is commutative, one has $y_1aS(y_2)=y_1 S(y_2)a=\e(y)a$. Thus,  all    sub-algebras of $H$ are normal.  

We  will denote by $\Im(\phi)$ the linear image and $\ker(\phi)$ the linear kernel of any morphism of Hopf algebras $\phi$. A morphism of Hopf algebras $\phi$ is injective if $\ker(\phi)=0$. The kernel $\ker(\e_H)$ of the counit of a Hopf algebra $H$ will more specifically be denoted by $H^+$.

  \begin{section}{Completeness and cocompleteness}
  
 In this section we prove Axiom A1.
 
 \begin{thm}
 The category $\Hfc$ is pointed, finitely complete and  finitely cocomplete.
\end{thm}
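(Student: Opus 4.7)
The strategy is to verify each required structure separately: zero object, binary products, equalizers, binary coproducts, and coequalizers.

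The base field $\kk$, viewed as a cocommutative Hopf algebra with identity structure maps, is a zero object of $\Hfc$: for any $H\in\Hfc$, the unit $\eta_H\colon\kk\to H$ and counit $\e_H\colon H\to\kk$ are the unique Hopf algebra morphisms to and from $\kk$. Binary products are realized by the tensor product $H\ot K$ endowed with its standard Hopf algebra structure (cocommutativity is preserved), with projections $\id_H\ot\e_K$ and $\e_H\ot\id_K$; the pairing of morphisms $f\colon E\to H$ and $g\colon E\to K$ is $(f\ot g)\circ\Delta_E$, which is a Hopf algebra morphism precisely because $E$ is cocommutative, so that $\Delta_E$ itself is a coalgebra morphism.

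For equalizers of $f,g\colon H\to K$, the linear subspace $L=\{x\in H\mid f(x)=g(x)\}$ is a unital subalgebra of $H$ stable under $S_H$, but is not in general a subcoalgebra. I would take the equalizer to be the subalgebra $E\subseteq H$ generated by all sub-Hopf algebras of $H$ contained in $L$. Since $\Delta_H$ is an algebra morphism and each generator's comultiplication lies in $E\ot E$, the subspace $E$ is itself a subcoalgebra, hence a sub-Hopf algebra, and manifestly the largest one contained in $L$. Its universal property is then immediate: any Hopf algebra morphism $\phi\colon E'\to H$ with $f\phi=g\phi$ has image a sub-Hopf algebra of $H$ lying in $L$, hence in $E$.

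Coequalizers of $f,g\colon H\to K$ are realized as the quotient $K/I$, where $I$ is the smallest Hopf ideal of $K$ containing $(f-g)(H)$. Its existence reduces to the fact that an arbitrary intersection of Hopf ideals is again a Hopf ideal; the only nonroutine point is stability of the intersection under the comultiplication, which in the cocommutative setting can be checked directly or deduced from the Newman--Masuoka correspondence between Hopf ideals and sub-Hopf algebras to be invoked in the subsequent sections.

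The main obstacle is the existence of binary coproducts. Unlike in $\Hfcc$, the tensor product is \emph{not} the coproduct in $\Hfc$: for two groups $G$ and $G'$, the coproduct of their group algebras is $\kk[G*G']$, the group algebra of the free product, and not $\kk[G\times G']=\kk[G]\ot\kk[G']$. My approach would be to appeal to the local presentability of $\Hfc$: since every cocommutative Hopf algebra is the filtered union of its finitely generated sub-Hopf algebras, standard arguments identify $\Hfc$ as a locally presentable category and hence as cocomplete. Combined with the explicit construction of coequalizers above, this yields finite cocompleteness.
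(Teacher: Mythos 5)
Your zero object, binary product and equalizer arguments are fine (your equalizer, the largest sub-Hopf algebra contained in $\{x\in H\mid f(x)=g(x)\}$, is a correct though less explicit variant of the paper's $\Heq(f,g)=\{x\mid f(x_1)\ot x_2=g(x_1)\ot x_2\}$), but your treatment of binary coproducts has a genuine gap. Local presentability cannot be invoked the way you do: a locally presentable category is, by definition, cocomplete, so the remark that every cocommutative Hopf algebra is the filtered union of its finitely generated sub-Hopf algebras does not by itself ``identify $\Hfc$ as locally presentable and hence cocomplete'' --- that is circular. To break the circle you would need either an independent construction of the colimits (which is exactly what is at stake), or a substantive theorem such as ``accessible and complete implies locally presentable'', and for the latter you would need all small limits, whereas you have only discussed finite ones (infinite products in $\Hfc$ are not infinite tensor products and are no more obvious than coproducts). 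The paper instead constructs $A\amalg B$ explicitly: it is the free product of the underlying algebras, generated by elements $t_a$ ($a\in A$) and $t_b$ ($b\in B$) subject to linearity and to the multiplicative relations of $A$ and of $B$, endowed with $\Delta(t_a)=t_{a_1}\ot t_{a_2}$, $\e(t_a)=\e(a)$, $S(t_a)=t_{S(a)}$; this is again cocommutative and the universal property is checked directly on generators. Some such construction (or a citation replacing it) is indispensable to the statement.

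The coequalizer paragraph also rests on a false principle: an intersection of Hopf ideals need not be a Hopf ideal, even for a commutative and cocommutative Hopf algebra in characteristic zero, so this can be neither ``checked directly'' nor deduced from the Newman correspondence. Take $H=\kk[G]$ with $G=\mathbb{Z}/2\times\mathbb{Z}/2=\{e,a,b,ab\}$, and let $I$ and $J$ be the linear kernels of the two Hopf algebra projections $\kk[G]\to\kk[G/\langle a\rangle]$ and $\kk[G]\to\kk[G/\langle b\rangle]$; both are Hopf ideals. Then $I\cap J=\kk z$ with $z=e-a-b+ab$, and $\Delta(z)=e\ot e-a\ot a-b\ot b+ab\ot ab$ does not lie in $\kk z\ot H+H\ot\kk z$ (comparing the coefficients of $e\ot e$, $e\ot b$, $a\ot e$ and $a\ot b$ in an element $z\ot u+v\ot z$ forces a contradiction), so $I\cap J$ is not a coideal. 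Your conclusion nevertheless survives, but for the reason the paper uses: since $f$ and $g$ are coalgebra maps, $J_0=(f-g)(H)$ is already a two-sided coideal with $\e(J_0)=0$ and $S(J_0)\subseteq J_0$, hence the two-sided ideal $KJ_0K$ it generates is a Hopf ideal; it is visibly the smallest Hopf ideal containing $J_0$, and $K/KJ_0K$ is the coequalizer. As written, your existence argument for the ``smallest Hopf ideal containing $(f-g)(H)$'' does not stand on its own.
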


\begin{proof}
  First, we remark  that the category  $\Hfc$ is pointed. Indeed, its zero object is the ground field $\kk$  with initial and terminal morphisms given by the unit $\eta_H:H\to \kk$ and the counit $\e_H:\kk\to H$.
  
 Finite (co)completeness is the existence of finite (co)limits which is equivalent to the existence of finite (co)products and (co)equalizer.  For the finite (co)products, as the category is pointed, it is in fact  sufficient to prove the existence of binary (co)products. Details can be found in  \cite[V.2]{ML}. 
 
The explicit descriptions of the   binary (co)products and (co)\-equalizer in $\Hfc$ are given below. The reader may check, by straightforward computations, that the given constructions fulfill the definitions.   
\end{proof}

 The definition of equalizers for morphisms of general Hopf algebras was first given by Andruskiewitsch and Devoto  in  {\cite{AD}}  generalizing the notions of kernel sooner  given in     \cite{Sw} or \cite{BCM}. The same authors give  explicit description of coequalizers and cokernels.
 For finite coproducts of Hopf algebras we refer to \cite[\S 2]{Pb}   and for products to \cite{A}, \cite{BW}, \cite{Acons}. We simply follow   the cited authors.  It happens that their   constructions for Hopf algebras restricts to  $\Hfc$.

 \begin{subsection}{Equalizers, kernels  and products} \
  
  First,  we  give the constructions of equalizers and kernels in $\Hfc$.  By \cite[Lemma~1.1.3]{AD},
 for any   two morphisms $ \xymatrix{  A   \ar@<0.5ex>[r]^{f}\ar@<-0.5ex>[r]_{g}& B}$  of $\Hfc$  the    set 
 $$\Heq(f,g)=
\{x\in A \mid  f(x_1)\ot x_2=g(x_1)\ot x_2 \}=     \{x\in A \mid  x_1\ot f(x_2)=x_1\ot g(x_2)  \}  
 $$ 
 is a   sub-Hopf algebra of $A$. 
The equalizer of  $ \xymatrix{  A   \ar@<0.5ex>[r]^{f}\ar@<-0.5ex>[r]_{g}& B}$ in $\Hfc$ is the inclusion morphism
$ \heq(f,g): \Heq(f,g)\to A$.

We denote by $\hker(f)$ the kernel of 
a morphism $ \displaystyle A \xrightarrow[]{f} B$  in $ \Hfc$ which is, by definition, the equalizer  
  $ \heq( f,   \eta_B{\circ} \e_A)$. Explicitly, $\hker(f)$ is the inclusion $ \Hker(f)\to A$   with 
  $$\Hker(f)= \{x\in A \mid  x_1\ot f(x_2)=x\ot 1)   = \{x\in A \mid  f(x_1)\ot x_2=1\ot x  \}.$$

 It can be easily check by straightforward computation that 
 the kernel $\Hker(f)$  of 
a morphism $ \displaystyle A \xrightarrow[]{f} B$ is a normal sub-Hopf algebra of $A$.  

The direct product of two objects $A$ and $B$ in $\Hfc$  is given by the tensor product over~$\kk$. Indeed the product is $<\pi_A,\pi_B>$ where the projections 
  
  $$ \xymatrix{   A &   A\ot B    \ar@<0ex>[r]^{\ \ \pi_B}  \ar@<0ex>[l]_{\pi_A} 
 & B }     $$
  are given by $\pi_A(a\ot b)= \e_B(b)a$ and $\pi_A(a\ot b)= \e_A(a)b$ for $a\ot b\in A\ot B$.
   For any two morphisms $f:H\to A$ and $g:H\to B$ of $\Hfc$, the morphism $\varphi:H\to A\ot B$ fulfilling the universal property    of the product is defined by $\varphi(x)= f(x_1)\ot g(x_2)$.

  This form of the product is very specific of the cocommutative case. It is a consequence of the fact that the comultiplication $\Delta_H$ of a Hopf algebra $H$ is   a morphism of coalgebras if and only if  $H$ is cocommutative.  
   
    \end{subsection}

\begin{subsection}{Coequalizers, cokernels  and coproducts}
   
We  now  give   explicit description of coequalizers and cokernels in $\Hfc$. 
   Let $A$ and $B$  be two cocommutative Hopf algebras. For any   two morphisms $ \xymatrix{  A   \ar@<0.5ex>[r]^{f}\ar@<-0.5ex>[r]_{g}& B}$, set 
  $$
  J=\{f(x)-g(x)\mid x\in A\} 
  \text { \quad and \quad} 
  \Hcoeq(f,g)=B/BJB  
  $$
The projection  $\hcoeq(f,g): B\to \Hcoeq(f,g)$ is the 
     coequalizer  of     $ \xymatrix{  A   \ar@<0.5ex>[r]^{f}\ar@<-0.5ex>[r]_{g}& B}$, set 
  $$
  J=\{f(x)-g(x)\mid x\in A\} 
  \text { \quad and \quad} 
  \Hcoeq(f,g)=B/BJB  
  $$
  in $\Hfc$.   
     In the sequel, for simplicity of notations, we sometimes denote the Hopf ideal $BJB$ by $\langle J\rangle$.

  The cokernel  $\hcoker(f)$ of 
a morphism $ \displaystyle A \xrightarrow[]{f} B$  in $ \Hfc$   is, by definition, the coequalizer  
  $ \hcoeq( f,   \eta_B{\circ} \e_A)$. 
    Notice that, when $g= \eta_B{{\circ}}\epsilon_A$,  the set $J$ reduces to $J= f(A^+)$ where $A^+=\ker(\epsilon_A)$ is the linear kernel of the counit of $A$. So the cokernel of $f$ is the projection map
    $$
    \hcoker(f): B \to B/\langle f(A^+)\rangle .
    $$
 We set $\Hcoker(f)= B/\langle f(A^+)\rangle $. 
 
 \bigskip
 Let $A$ and $B$ be two objects in  $\Hfc$. The coproduct object $A\amalg B$ of $A$ and $B$ in  $\Hfc$ has the following explicit description 
 (\cite{Pb} or \cite{Acons}). The coproduct $A\amalg B$ is the  module  spanned over $\kk$ as an algebra by the elements $1$,  $t_{a}$ and $t_{b}$ with $a\in A $ and $b\in B$ submitted to the relations 
 $$t_{\lambda}=\lambda,\quad
  t_{ \lambda a + b}= \lambda t_{   a} + t_{ b}
  ,\quad
  t_{aa'}=  t_{a}t_{a'}, \quad t_{bb'}=  t_{b}t_{b'}
  $$
with $\lambda\in \kk$, with $a,a'\in A$, and $b,b'\in B$.

   The coproduct  $A\amalg B$   is endowed with a Hopf algebra structure given by:
 
 $$
 \Delta(t_{a})=  \sum t_{a_1}\ot t_{a_2}   ,\quad
 \epsilon(t_{a})=\epsilon(a)  ,\quad
 S(t_{a})= t_{S(a)}.
 $$
 with $a\in A$ or $a\in B$. The coproduct diagram is  given by 
  $$ \xymatrix{    A \ar@<0ex>[r]^{\iota_A\ \ \ } & A\amalg B &
B\ar@<0ex>[l]_{\quad \iota_B } } 
 $$
 with $\iota_A(a)= t_{a}$ and $\iota_B(b)= t_{b}$ for $a\in A$, and $b\in B$.
This construction  satisfies the   universal  property of coproduct. Indeed, for any  two morphisms $f: A\to H$ and $g: B\to H$ of $\Hfc$, the unique morphism $ h: A\amalg B\to H$  such as one has $h{\circ}\iota_A=f$ and  $h{\circ}\iota_B=g$ 
is defined by $h(t_{a})= f(a)$ and $h(t_{b})= g(b)$ on the generators of $A\amalg B$ with $a\in A$ and $b\in B$.

 \end{subsection}
 
 \end{section}

 \begin{section} {The Newman correspondence,  the semi-direct product}
 
In this section,  we  recall some constructions and results involving kernel and cokernels which we will use in the sequel.

 \begin{subsection}{The Newman correspondence} 
  
The following result is crucial in the sequel of the paper

\begin{thm}\cite{N}
For any   cocommutative   Hopf algebra over a field, there is a one-to-one correspondence between its sub-Hopf algebras  and its left ideals  which are also two-sided coideals. 
\end{thm}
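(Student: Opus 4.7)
The plan is to exhibit two explicit mutually-inverse constructions $\Phi$ and $\Psi$. Given a sub-Hopf algebra $K \subseteq H$, writing $K^+ = K \cap \ker(\epsilon_H)$, set $\Phi(K) = HK^+$. Conversely, given a left ideal $I \subseteq H$ that is also a two-sided coideal, the quotient $H/I$ inherits the structure of a left $H$-module coalgebra, the projection $\pi\co H \to H/I$ being simultaneously a coalgebra map and left $H$-linear; set
$$
\Psi(I) = \{h \in H \mid h_1 \ot \pi(h_2) = h \ot \pi(1)\}.
$$

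First I would verify that $\Phi$ and $\Psi$ are well-defined. That $HK^+$ is a left ideal is tautological and $\epsilon_H(HK^+) = 0$ is immediate; the coideal condition $\Delta(HK^+) \subseteq HK^+ \ot H + H \ot HK^+$ follows, via the product formula for $\Delta$, from the observation that for $k \in K^+$ one has $\Delta(k) - k \ot 1 - 1 \ot k \in K^+ \ot K^+$. Showing that $\Psi(I)$ is closed under multiplication is the first place where cocommutativity is genuinely used: since $I$ need not be a right ideal, $H/I$ is only a left $H$-module coalgebra, and proving $(\id \ot \pi)\Delta(ab) = ab \ot \pi(1)$ for $a,b \in \Psi(I)$ requires combining the coinvariance condition on $b$, the cocommutativity identity $h_1 \ot h_2 = h_2 \ot h_1$, and the coinvariance condition on $a$. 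Stability under $\Delta$ and under the antipode are similar, again leaning on cocommutativity.

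The principal obstacle is then to check that $\Phi$ and $\Psi$ are mutually inverse. The inclusions $K \subseteq \Psi(\Phi(K))$ and $\Phi(\Psi(I)) \subseteq I$ follow by direct computations: for the first, one uses that $k_2 - \epsilon(k_2)\cdot 1 \in K^+ \subseteq HK^+$ for $k \in K$, so that $k_1 \ot \pi(k_2) = k_1 \epsilon(k_2) \ot \pi(1) = k \ot \pi(1)$; for the second, applying $\epsilon \ot \id$ to the defining relation of $\Psi(I)$ shows $\Psi(I)^+ \subseteq I$, whence $H\,\Psi(I)^+ \subseteq I$ by left-idealness of $I$. The reverse inclusions $\Psi(\Phi(K)) \subseteq K$ and $I \subseteq \Phi(\Psi(I))$ are significantly deeper: they amount to a faithful-coflatness-type statement asserting that $K$ is recovered exactly as the coinvariants of the projection $H \to H/HK^+$ in the cocommutative setting, and dually that $I$ coincides with $H \Psi(I)^+$. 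This is the technical heart of the argument and the content of \cite{N}, to which I would defer for the detailed verification.
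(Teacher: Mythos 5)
Your two constructions are exactly the correspondence the paper records from Newman ($\tau(G)=HG^{+}$ and $\sigma(I)=\Hker(H\to H/I)$, your $\Psi(I)$ being precisely that kernel), and like the paper itself you defer the substantive bijectivity argument --- the inclusions $\Psi(\Phi(K))\subseteq K$ and $I\subseteq\Phi(\Psi(I))$ --- to \cite{N}, which is all the paper does since it states the theorem with that citation and no proof. So your proposal is correct as far as it goes and follows essentially the same route, with the easy verifications you add (well-definedness and the two elementary inclusions) being accurate.
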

  
 If $G$ is a sub-Hopf algebra of a Hopf algebra $H$, Newman associates the ideal  $\tau(G)= HG^+$.
 He proves that $\tau$ is bijective with reciprocal $\sigma(I)= \Hker(  H\to H/I)$.

 We state three lemmas  directly induced by this result. 
 
 \begin{lm} \label{lm1}
 Let $H$ be a cocommutative Hopf algebra over  a field. For any Hopf ideal $I$ of $H$, it exists a sub-coalgebra $G$ of $H$ such that one has the isomorphism of cocommutative Hopf algebras
 $$
 H/ I\cong H/HG^+H.
  $$
 \end{lm}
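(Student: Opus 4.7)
The plan is to invoke the Newman correspondence stated just above and then exploit the extra structure of a Hopf ideal. Given a Hopf ideal $I$ of $H$, I would first forget momentarily that $I$ is a two-sided ideal stable under the antipode, and remember only that $I$ is a left ideal of $H$ which is also a two-sided coideal. This is precisely the class of objects to which Newman's bijection applies.

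Applying the inverse map of the correspondence produces a sub-Hopf algebra (in particular a sub-coalgebra) $G := \Hker(H \to H/I)$ of $H$, and the identity $\tau \circ \sigma = \id$ yields $HG^+ = I$. Reinstating now the hypothesis that $I$ is also a right ideal, I obtain $HG^+ \cdot H = I \cdot H \subset I = HG^+$, so that $HG^+H = HG^+ = I$. The announced isomorphism $H/I \cong H/HG^+H$ is thus literally the identity of cocommutative Hopf algebras, the quotient Hopf algebra structure on $H/I$ being well-defined precisely because $I$ is a Hopf ideal.

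I do not expect any serious obstacle: the statement is essentially a reformulation of Newman's theorem in the two-sided setting, the sole additional ingredient being the elementary observation that the left ideal $HG^+$ produced by Newman is automatically a right ideal as soon as it coincides with the two-sided ideal one started with.
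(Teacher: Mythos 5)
Your proof is correct and follows essentially the same route as the paper: apply Newman's correspondence to $I$ viewed as a left ideal and two-sided coideal to get $G=\Hker(H\to H/I)$ with $HG^+=I$, then use that $I$ is also a right ideal to conclude $HG^+H=IH=I$, hence $H/I\cong H/HG^+H$. Nothing further is needed.
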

 In other words the projection  map $H \to  H/I$ is a cokernel in the category $\Hfc$. 
 
\begin{proof}  The two-sided ideal $I$ is in particular a  left ideal. So after   \cite[Corollary 3.4]{N}, one has  $I \cong HG^+$ for $G= \Hker(   H\to H/I)$. As $I$ is also a right ideal, we deduce $I=  IH\cong   HG^+H$.  
 \end{proof}
   
   As a consequence, we have: 
 
 \begin{lm} \label{lm1-bis}
 Let $f:H\to H'$ be a surjective map of cocommutative Hopf algebras over  a field.  
 The map $f$ is a cokernel in the category $\Hfc$.
  \end{lm}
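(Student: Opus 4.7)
The plan is to reduce to the previous lemma via the standard first-isomorphism-type argument for Hopf algebras. First I would observe that for any morphism $f:H\to H'$ in $\Hfc$, the linear kernel $\ker(f)$ is automatically a Hopf ideal of $H$: it is a two-sided ideal (both sides are clear from $f$ being an algebra map), it is a two-sided coideal (using cocommutativity, the counit vanishes on it, and compatibility of $f$ with comultiplication gives the required inclusion $\Delta_H(\ker(f))\subset \ker(f)\otimes H+H\otimes \ker(f)$), and it is stable under the antipode since $f\circ S_H=S_{H'}\circ f$.

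Next, the quotient Hopf algebra $H/\ker(f)$ is well-defined by the convention fixed in Section 1, and since $f$ is surjective the canonical factorization yields an isomorphism of cocommutative Hopf algebras $\bar f:H/\ker(f)\xrightarrow{\,\sim\,}H'$. Thus $f$ is the composite of the projection $\pi:H\to H/\ker(f)$ with the Hopf algebra isomorphism $\bar f$.

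By Lemma~\ref{lm1} applied to the Hopf ideal $I=\ker(f)$, the projection $\pi:H\to H/\ker(f)$ is a cokernel in $\Hfc$. Since being a cokernel is a property invariant under post-composition with an isomorphism (isomorphisms are themselves cokernels, and the class of cokernels is closed under such composition up to unique isomorphism in the target), it follows that $f=\bar f\circ \pi$ is itself a cokernel.

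The only step that requires genuine content is Lemma~\ref{lm1}, which rests on the Newman correspondence; everything else here is formal. So I do not expect any real obstacle: once the preceding lemma is available, this corollary amounts to packaging the first isomorphism theorem together with the observation that isomorphisms do not disturb the property of being a cokernel.
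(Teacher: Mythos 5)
Your proposal is correct and follows essentially the same route as the paper: identify $\ker(f)$ as a Hopf ideal, use surjectivity to get $H/\ker(f)\cong H'$, and invoke Lemma~\ref{lm1}; the paper merely states these steps more tersely. (One small remark: the coideal property of $\ker(f)$ holds for any Hopf algebra morphism and does not actually need cocommutativity, but this does not affect your argument.)
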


\begin{proof}
Consider the linear ideal $I=\ker(f)$ of $f:H\to H'$.  It is well know that $I$ is a Hopf ideal. Moreover, one has $H/I\cong H'$.
After Lemma \ref{lm1}, the map is a cokernel.  \end{proof}

  \begin{lm} \label{lm2}
  Let $H$ be a cocommutative Hopf algebra over a field and $G$ one of its normal sub-Hopf algebras.  Then  we have $G\cong \Hker( H\to H/HG^+H)$. 
  \end{lm}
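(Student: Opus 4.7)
The plan is to reduce the claim to the Newman correspondence stated above. Applied to the sub-Hopf algebra $G \subseteq H$, that correspondence already gives
$$G = \sigma(\tau(G)) = \Hker\bigl(H \to H/HG^+\bigr).$$
Since $HG^+ \subseteq HG^+H$ is automatic, the lemma will follow as soon as we show that under the normality hypothesis the two sides actually coincide, for then $H \to H/HG^+$ and $H \to H/HG^+H$ are literally the same map. So the real task is to verify that the left ideal $HG^+$ is already two-sided when $G$ is normal.

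I will prove the stronger equality $HG^+ = G^+ H$. For $y \in H$ and $a \in G$, the antipode axiom together with coassociativity yields the identity
$$ya = \sum y_{(1)} a S(y_{(2)}) y_{(3)} = \sum \mathrm{ad}(y_{(1)})(a) \cdot y_{(2)},$$
where $\mathrm{ad}(y)(a) := \sum y_{(1)} a S(y_{(2)})$ is the adjoint action (the first equality comes from replacing $\epsilon(y_{(2)})\cdot 1$ by $\sum S(y_{(2,1)}) y_{(2,2)}$, the second from regrouping via the other coassociativity isomorphism). Normality of $G$ is precisely the statement that $\mathrm{ad}$ preserves $G$, so each term $\mathrm{ad}(y_{(1)})(a)$ lies in $G$ and the tensor $\sum \mathrm{ad}(y_{(1)})(a) \otimes y_{(2)}$ lies in $G \otimes H$. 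When moreover $a \in G^+$, applying $\epsilon \otimes \mathrm{id}$ to this tensor returns $\epsilon(a)\, y = 0$, so the tensor in fact lies in $G^+ \otimes H$; multiplying back gives $ya \in G^+H$. The reverse inclusion $G^+H \subseteq HG^+$ is free from the antipode: $S$ is a bijective anti-algebra map, $S(G^+) = G^+$ and $S(H) = H$, so $S$ interchanges $HG^+$ with $G^+ H$, and applying $S$ to the inclusion just established yields its partner.

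The main obstacle is extracting the identity $ya = \sum \mathrm{ad}(y_{(1)})(a)\, y_{(2)}$ cleanly from the Hopf algebra axioms; after that, only elementary bookkeeping with the counit and the normality assumption remains. Once $HG^+ = HG^+H$ is in place, the Newman correspondence immediately delivers $G \cong \Hker(H \to H/HG^+H)$, which is the statement.
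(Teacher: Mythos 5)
Your proposal is correct and follows essentially the same route as the paper: the heart of both arguments is the normality trick (your identity $ya=\sum y_1aS(y_2)\,y_3$ is the mirror of the paper's $gh=h_1(S(h_2)gh_3)$) showing that the left ideal $HG^+$ equals $HG^+H$, after which the bijectivity of the Newman correspondence yields the isomorphism. The only cosmetic difference is that you invoke the composite $\sigma\tau=\mathrm{id}$ directly, while the paper uses $\tau\sigma=\mathrm{id}$ (Newman's Corollary 3.4) together with injectivity of $\tau$.
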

  In other words any inclusion of a normal sub-Hopf algebra into a Hopf algebra is a kernel. 

 \begin{proof}  As $G$ is a normal sub-Hopf algebra, the usual trick 
  $ 
 gh= h_1 (S(h_2)gh_3)
 $ for $g\in G$ and $h\in H$  shows that $HG^+H= HG^+$. Moreover one has $HG^+= H\!\left( \Hker( H\to H/HG^+H)\right)^+$ by    \cite[Corollary 3.4]{N}     and,  as $\tau$ is injective by \cite[Corollary 2.5]{N}, on has 
 $G\cong   \Hker( H\to H/HG^+H)$.
 \end{proof}

 We also point out an important lemma which can be found as a part of the proof of \cite[Theorem 4.4]{N}
 
  \begin{lm} \label{lm3}
  A monomorphism in $\Hfc$ is injective. 
    \end{lm}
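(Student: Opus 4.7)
The plan is to translate the monomorphism hypothesis into a statement about the categorical kernel $\Hker(f)$, and then to use the Newman correspondence packaged in Lemma \ref{lm1} to upgrade this to linear injectivity.

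Given a monomorphism $f\colon A\to B$ in $\Hfc$, I would first consider its categorical kernel $\hker(f)\colon \Hker(f)\hookrightarrow A$ as constructed in Section 2. From the defining relation $x_1\ot f(x_2)=x\ot 1$ of $\Hker(f)$, applying $\e_A$ to the first tensor factor gives $f(x)=\e(x)\,1_B$ for every $x\in \Hker(f)$. Consequently $f\circ\hker(f)$ equals $\eta_B\circ\e_{\Hker(f)}$, i.e.\ the zero morphism $\Hker(f)\to B$ of the pointed category $\Hfc$, which also equals $f$ composed with the zero morphism $\Hker(f)\to A$. The monomorphism property of $f$ then forces $\hker(f)$ itself to be the zero morphism; since $\hker(f)$ is realized as a linear inclusion, this yields $x=\e(x)\,1_A$ for every $x\in \Hker(f)$, so $\Hker(f)=\kk$.

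Next, set $I=\ker(f)$, the linear kernel. Being the kernel of a morphism of Hopf algebras, $I$ is a Hopf ideal of $A$; the projection $\pi\colon A\to A/I$ lies in $\Hfc$, and $f$ factors as $j\circ\pi$ with $j\colon A/I\to B$ linearly injective. Since $\mathrm{id}_A\ot j$ is then also injective, the equalizer condition defining $\Hker(\pi)$ is equivalent to that of $\Hker(f)$, giving $\Hker(\pi)=\Hker(f)=\kk$. Applying the proof of Lemma \ref{lm1} to the Hopf ideal $I$ yields $I=A\,\Hker(\pi)^+A=A\cdot 0\cdot A=0$, so $f$ is linearly injective, which by the convention of Section 1 is precisely the meaning of injectivity.

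The only step demanding care is the transition from \emph{$\hker(f)$ being the zero morphism in $\Hfc$} to \emph{$\Hker(f)=\kk$}; this relies on $\hker(f)$ being concretely realised as a linear inclusion, which is guaranteed by the explicit construction of kernels recalled in Section 2. Everything else is a direct application of the Newman correspondence as packaged in Lemma \ref{lm1}, so I do not anticipate any genuine obstacle.
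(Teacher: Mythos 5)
Your proof is correct and follows essentially the same route as the paper: both arguments rest on the Newman correspondence (via Lemma \ref{lm1}) applied to the Hopf ideal $\ker(f)$, together with the monomorphism property forcing a sub-Hopf algebra that $f$ sends to $\kk\cdot 1_B$ to be trivial. The only difference is organizational: you first prove $\Hker(f)=\kk$ and then identify it with the sub-Hopf algebra $G=\Hker(A\to A/\ker(f))$ furnished by Newman, whereas the paper applies the monomorphism property directly to the inclusion $\iota\colon G\to A$, using $\iota(G^+)\subset\ker(f)$.
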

 
 \begin{proof}
 Let $m: X\to Y$ be a monomorphism in $\Hfc$. The linear kernel $\ker(m)$ of $m$ is a Hopf ideal of $X$ by \cite[Theorem 4.17]{Sw}. By Lemma \ref{lm1}, it exists a Hopf algebra inclusion $ \xymatrix{  G   \ar [r]^{\iota} & X}$ such as $X\iota(G^+)X=\ker(m)$. This implies $(m{\circ} \iota)(g)= \e_X(g)1_Y= (m{\circ} \eta_Y{\circ}\e_X{\circ }\iota)(g) $ for $g\in G$. So we get $  \iota =\eta_Y{\circ}\e_X{\circ }\iota$ and thus we have  $G=\kk$. Finally, one has $\ker(m)= X\kk^+X=\{0\}$.
 \end{proof}

 \end{subsection}

 \subsection{The semi-direct product of Hopf algebras}

 We will need a notion of semi-direct product of Hopf algebras. The construction we will recall for our purpose is  very special case of the now classical semi-direct product introduced in the article \cite{BCM} to which we refer for proofs. Anyway properties of the product we state here    may also be     checked through direct calculation. 
  
 Let $Y$ and $K$ be two Hopf algebras, we will say that $K$ is a {\it $Y$-Hopf algebra} if   $Y$ acts on $K$. In other words if it exists an action map $\hbox{--}\act\hbox{--}: Y\ot K \to K  $ such as the following axioms are satisfied: 
 
\begin{eqnarray*}
y\act (ab)=( y_1\act a)(y_2\act b) &\qquad&  1_Y\act a=a\\
(yy')\act a= y\act(y'\act a)&& y\act 1_K= \e_Y(y)1_K\\
\end{eqnarray*}
 with $y,y'\in Y$ and  $a,b\in K$.   We denote by 
 $Y\hbox{-}\Hfc$ the  subcategory of $\Hfc$ having as objects the  $Y$-Hopf algebras and as morphisms those of $\Hfc$ compatible with the action. 
 
 Given  an object $K$ in  $Y\hbox{-}\Hfc$,   one may define a semi-direct product $K\#Y$ of $K$ and $Y$. As a module it is $K\ot Y$ on which a structure of Hopf algebras is endowed by 
 
  \begin{eqnarray*}
 (a\ot y)(b\ot y')&=& a (y_1 \act b) \ot y_2y'   \\
 \eta(1)&=& 1\ot 1\\
  \D( a\ot y)&=& ( a_1\ot y_1)\ot ( a_2\ot y_2) \\
   \e(a\ot y)&=& \e(a)\e(y)\\
 S(  x\ot y )&= & (S(y_1)\act S(a)) \ot S(y_2)  
 \end{eqnarray*}
 
given for $a,b\in K$ and $y,y'\in Y$.
 This product is nothing else than the product $ K\#_\sigma Y$ of \cite{BCM} with cocycle $ \sigma= \eta_Y{\circ}(\e_K\ot\e_K) :   K\ot K \to Y $.  In the sequel, an element $a\ot y \in  K\#Y$ will be denoted by $a\#y$.

Consider the category $\Pt_Y$ of pointed objects over an object $Y$ of $\Hfc$. 
 Its objects are the couples of maps  $ (p,s): \xymatrix{  X\ar[r]_{ p } & Y \ar@/_0.7pc/[l]_{s}} $ of $\Hfc$ such that $s$ is a section of $p$ ({\it i.e.} $p{\circ} s= \id_Y)$. The morphisms of $\Pt_Y$ between two objects $ \xymatrix{  X\ar[r]_{ p } & Y \ar@/_0.7pc/[l]_{s}} $ and  $\xymatrix{  X'\ar[r]_{ p' } & Y \ar@/_0.7pc/[l]_{s'}} $   are the maps $f:X\to X'$ such has one has $p'{\circ} f=p$ and $f{\circ} s=s'$.

 \begin{lm} \label{lm4}
 Let $Y$ be an object of $\Hfc$. The categories $\Pt_Y$ and $Y\hbox{-}\Hfc$ are equivalent. \end{lm}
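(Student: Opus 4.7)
The plan is to adapt the classical equivalence between split extensions and semi-direct products to the Hopf-algebraic setting: I would construct a pair of functors $F \colon Y\text{-}\Hfc \to \Pt_Y$ and $G \colon \Pt_Y \to Y\text{-}\Hfc$, and then show that $G \circ F$ and $F \circ G$ are naturally isomorphic to the respective identities. The functor $F$ sends a $Y$-Hopf algebra $K$ to the split epimorphism $p \colon K \# Y \to Y$, $a \# y \mapsto \varepsilon_K(a) y$, together with the section $s \colon Y \to K \# Y$, $y \mapsto 1_K \# y$. The explicit formulas for the Hopf structure on $K \# Y$ recalled in the previous subsection make it straightforward to check that $p$ and $s$ are morphisms of cocommutative Hopf algebras with $p \circ s = \id_Y$, and a $Y$-equivariant Hopf map $f \colon K \to K'$ induces a morphism $f \otimes \id_Y$ in $\Pt_Y$.

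Conversely, $G$ sends an object $(p \colon X \to Y, s \colon Y \to X)$ of $\Pt_Y$ to $K := \Hker(p)$, endowed with the adjoint action $y \act k := s(y_1) \, k \, s(S(y_2))$. Well-definedness relies on the fact, already noted in the previous section, that $\Hker(p)$ is a normal sub-Hopf algebra of $X$: this means exactly that $x_1 \, k \, S(x_2) \in K$ for every $x \in X$ and $k \in K$, so specialising to $x = s(y)$ yields $y \act k \in K$. The four axioms of a $Y$-action follow from direct Sweedler computations using that $s$ is a Hopf algebra morphism and $p \circ s = \id_Y$. The isomorphism $G \circ F \cong \id$ is relatively easy: the Hopf-kernel of $K \# Y \to Y$ is visibly $K \# 1 \cong K$, and the semi-direct multiplication formula combined with the associativity axiom $(yy') \act a = y \act (y' \act a)$ shows that the adjoint action on $K \# Y$ restricts to the original action on $K$.

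The main obstacle will be the reverse direction $F \circ G \cong \id$. Given $(p, s, K = \Hker(p))$, I would define $\Phi \colon K \# Y \to X$ by $\Phi(k \# y) = k \cdot s(y)$, with candidate inverse $\Psi \colon X \to K \# Y$ given by $\Psi(x) = x_1 \, s(S(p(x_2))) \, \# \, p(x_3)$. The key points to verify are: (i) that $x_1 \, s(S(p(x_2)))$ actually belongs to $K$, which is a direct Sweedler computation applying $(\id \otimes p)\Delta$ and using $p \circ s = \id_Y$ together with cocommutativity of $X$; (ii) that $\Phi$ is an algebra morphism, which translates the twisted multiplication $(k \# y)(k' \# y') = k (y_1 \act k') \# y_2 y'$ into the identity $k \, s(y) \, k' \, s(y') = k \, (s(y_1) \, k' \, s(S(y_2))) \, s(y_3 y')$ inside $X$; and (iii) that $\Phi$ and $\Psi$ are mutually inverse, a further Sweedler computation. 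Compatibility of $\Phi$ with $p$ and $s$ is immediate from the definitions, and naturality of both isomorphisms in their arguments is automatic from the constructions.
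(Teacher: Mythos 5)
Your proposal follows essentially the same route as the paper: the same smash-product construction $K\#Y$ with $p=\e_K\ot\id_Y$ and $s=\eta_K\ot\id_Y$, the same adjoint action $y\act k=s(y_1)\,k\,s(S(y_2))$ on $\Hker(p)$, and the same mutually inverse maps $x\mapsto x_1 s(Sp(x_2))\#p(x_3)$ and $k\#y\mapsto k\,s(y)$ (the paper simply defers the verifications to \cite{BCM}). Your plan is correct; the Sweedler checks you outline (membership in $\Hker(p)$ via cocommutativity, multiplicativity, invertibility) are exactly the ones needed.
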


\begin{proof} Details may be found in \cite{BCM}, we will only  describe the correspondence between objects.
To an action   $\hbox{--}\act\hbox{--}: Y\ot K \to K  $ one associates the maps 
$$
p= \e_K\ot \id_Y: K\# Y\to Y \text{\quad and\quad } s= \eta_K\ot \id_Y: Y\to K\#Y.
$$

 On the other hand, given the data $\xymatrix{  X\ar[r]_{ p } & Y \ar@/_0.7pc/[l]_{s}} $, one sets $K=\Hker(p)$. It is easy to check that $y\act k= s(y_1)ks(S_Y(y_2))$ defines an action of $Y$ on $K$.
 
 The equivalence is based on the isomorphism  between   $K{ \#} Y$  and  $X$     given by 
$$
\begin{aligned}
\begin{minipage}{150pt}  $\begin{aligned}\mathcal F: \ &X\to  K\# Y  \\  &x \mapsto   x_1 s(Sp(x_2))\# p(x_3)\end{aligned} 
$
\end{minipage} && 
\begin{minipage}{150pt}  $ \begin{aligned}\mathcal G: \ &   Y\# K \to X \\  &  k\# y \mapsto  ky\end {aligned} $ 
\end{minipage} 
\end{aligned}
$$
\end{proof}

 \end{section}

 \begin{section}{Pullbacks of cokernels, Regularity}

This section is devoted to the proof of Axiom A3.  As a consequence,   one deduces  that $\Hfc$ is a regular category and a homological category.

From the constructions of products and equalizers one easily derives the  one for 
   pullbacks in $\Hfc$.  
Let $A$, $B$, $C$ be cocommutative  Hopf algebras and 
   $f: A\to C$ and $g:B\to C$  be morphisms of Hopf algebras. 
The pullback object of  $A$ and $B$ over $C$ is given by 
$$ A{\invamalg}_C B= \{ a\ot b\in A\ot B \mid a_1\ot f(a_2)\ot b= a\ot g(b_1)\ot b_2 \}.$$
It is a   sub-Hopf algebra of the product Hopf algebra   $A\ot B$. 
One has the commutative diagram 
$$
  \xymatrix{
      A{\invamalg}_CB \ar[r]^{\ \pi_B} \ar[d]_{\pi_A} & B \ar[d]^g \\
     A \ar[r]^f & C
  }
$$
with $\pi_B(a\ot b)=\epsilon(a)b$ and $\pi_A(a\ot b)=\epsilon(b)a$.

The universal property of pullbacks is given in the following way. For $H$  a cocommutative Hopf algebra and
$\gamma: H\to B$ and $\phi: H\to A$ two morphisms, there exists a unique morphism $F: H \to  A{\invamalg}_CB$ 
 making        commutative the diagram

$$
  \xymatrix{
  H\ar@/^/[rrd]^\gamma\ar@/_/[rdd]_\varphi \ar@{.>}[rd]^F \\
    & A{\invamalg}_CB \ar[r]^{\pi_B} \ar[d]_{\pi_A} & B \ar[d]^g \\
    & A \ar[r]^f & C
  }
$$ 
The morphism $F$ is defined by $F(d)= \varphi(d_1)\ot \gamma( d_2)$ for any $d\in H$.

The following theorem is the Theorem $3.7$ in \cite{GKV}.

\begin{thm}
In the category $\Hfc$, the pullback of a cokernel is a cokernel when the ground field has characteristic zero.
\end{thm}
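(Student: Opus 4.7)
The plan is to reduce the statement to the surjectivity of $\pi_B$, and then to exploit the Cartier--Kostant--Milnor--Moore structure theorem for cocommutative Hopf algebras in characteristic zero.

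First, by Lemma~\ref{lm1-bis} a morphism in $\Hfc$ is a cokernel if and only if it is surjective, so the hypothesis reduces to the surjectivity of $f\colon A\to C$ and the conclusion to the surjectivity of $\pi_B\colon A\invamalg_C B\to B$. Since $\Im(\pi_B)$ is a sub-Hopf algebra of $B$, one must show it equals $B$. By Cartier--Kostant--Milnor--Moore---this is where the characteristic zero hypothesis enters crucially---every cocommutative Hopf algebra $H$ over $\kk$ admits a smash-product decomposition $H\cong U(P(H))\#\kk[G(H)]$, so any sub-Hopf algebra of $B$ containing both the group $G(B)$ of grouplikes and the Lie algebra $P(B)$ of primitives is equal to $B$. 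It therefore suffices to produce preimages under $\pi_B$ for each element of $G(B)$ and each element of $P(B)$.

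For $b\in G(B)$, the explicit description of the pullback shows that any grouplike $a\in G(A)$ satisfying $f(a)=g(b)$ yields $a\otimes b\in A\invamalg_C B$ with $\pi_B(a\otimes b)=\e(a)\,b=b$. For $x\in P(B)$, any primitive $y\in P(A)$ satisfying $f(y)=g(x)$ produces the element $\alpha=y\otimes 1+1\otimes x$, which one checks lies in $A\invamalg_C B$ by computing $(\id_A\ot f\ot \id_B)(\Delta_A\ot\id_B)(\alpha)$ and $(\id_A\ot g\ot \id_B)(\id_A\ot \Delta_B)(\alpha)$, and which satisfies $\pi_B(\alpha)=x$. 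The whole proof thus reduces to the following non-trivial claim, which I expect to be the main obstacle: \emph{a surjective morphism of cocommutative Hopf algebras over a field of characteristic zero restricts to surjections $G(A)\to G(C)$ and $P(A)\to P(C)$.}

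To establish the claim one applies CKMM twice. The decomposition $H\cong U(P(H))\#\kk[G(H)]$ makes the canonical projection $H\to\kk[G(H)]$ a morphism of Hopf algebras, so the surjectivity of $f$ passes to a surjective morphism $\kk[G(A)]\to\kk[G(C)]$ of group algebras, which by linear independence of grouplikes is itself surjective on $G$. Dually, the quotient $H\to H/(H\cdot \kk[G(H)]^+)\cong U(P(H))$ reduces the remaining assertion to the statement that a surjective morphism $U(P(A))\to U(P(C))$ of universal enveloping algebras restricts to a surjection $P(A)\to P(C)$; this holds in characteristic zero by the classical identification $P(U(\mathfrak{l}))=\mathfrak{l}$. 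Combined with the conditional construction of preimages above, this yields the surjectivity of $\pi_B$, hence the fact that $\pi_B$ is a cokernel.
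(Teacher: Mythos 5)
Your overall strategy is reasonable and in the same spirit as the source the paper relies on: note that the paper itself gives no proof of this statement, which is quoted from \cite[Theorem 3.7]{GKV} (with only the remark that the proof there uses Cartier--Milnor--Moore in an essential way), so there is no internal proof to compare with; what must be judged is your argument on its own. The reduction of ``cokernel'' to ``surjective'' via Lemma \ref{lm1-bis}, and the explicit preimages $a\ot b$ (for $b$ grouplike) and $y\ot 1+1\ot x$ (for $x$ primitive) in $A\invamalg_C B$, are correct. But two steps have genuine problems. First, the decomposition you invoke, $H\cong U(P(H))\#\kk[G(H)]$, holds for \emph{pointed} cocommutative Hopf algebras, hence over an algebraically closed field of characteristic zero, but not over an arbitrary field of characteristic zero as the theorem requires: for $\kk=\Q$ the dual Hopf algebra $H=(\kk[C_3])^*$ is cocommutative (even commutative), three-dimensional, has only the trivial grouplike and no nonzero primitive, so $U(P(H))\#\kk[G(H)]=\kk\neq H$. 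As written your proof covers only the algebraically closed case; it can be repaired by first extending scalars to $\overline{\kk}$ (a faithfully flat extension detects surjectivity, and $A\invamalg_C B$ is a linear equalizer, hence commutes with the extension), but this step is absent and is needed.

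Second, and more seriously, your treatment of the primitive half of the Claim rests on the isomorphism $H/(H\cdot\kk[G(H)]^+)\cong U(P(H))$, which is false whenever $G(H)$ acts nontrivially on $P(H)$: modulo the ideal generated by $\kk[G(H)]^+$ one has $g\act u\equiv u$, so the quotient is $U(P(H))$ modulo the ideal generated by all $g\act u-u$. For example, for $H=\kk[x]\#\kk[t,t^{-1}]$ with $x$ primitive, $t$ grouplike and $t\act x=2x$, this quotient is $\kk$, not $\kk[x]$; so no surjection $U(P(A))\to U(P(C))$ is obtained this way. (Note the asymmetry with the grouplike case: killing the ideal generated by $U(P(H))^+$ does give $\kk[G(H)]$, which is why that half of your argument is essentially sound, though the surjectivity of the restricted map $\kk[G(A)]\to\kk[G(C)]$ still deserves a sentence, e.g.\ factor $A\to C\to\kk[G(C)]$ through $A/\langle P(A)\rangle\cong\kk[G(A)]$ and use linear independence of grouplikes.) The Claim for primitives is nevertheless true in the pointed case, but needs a different argument, for instance: since $A$ is generated as an algebra by $P(A)\cup G(A)$ and $f$ is onto, $C$ is generated by the Lie subalgebra $f(P(A))\subseteq P(C)$, stable under conjugation by $G(C)=f(G(A))$, together with $G(C)$; hence $C=U(f(P(A)))\#\kk[G(C)]$, and comparing with $C=U(P(C))\#\kk[G(C)]$ componentwise over $G(C)$ gives $U(f(P(A)))=U(P(C))$, whence $f(P(A))=P(C)$ by taking primitives in characteristic zero. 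With the base-change step and this corrected argument your proposal becomes a complete proof; as it stands it has real gaps.
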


 \begin{rem}
 We do not know if the similar statement for a field of positive characteristic is still true. In fact, the proof of \cite{GKV} uses in an essential way the Cartier-Milnor-Moore theorem which requires the condition on the characteristic of the ground field.
\end{rem}

 \end{section}

  
\section{Coproducts and split epimorphisms}

 The following proposition proves the Axiom A.2   for the category $\Hfc$. 
  
 \begin{prop}
 Let $p$ be  a  morphism in $\Hfc$, $s$  one of its  sections ({\it i.e.} $p{\circ} s= \id_Y$)  in $\Hfc$ and $\kappa:K\to Y$ be the kernel of $p$ in  $\Hfc$ :
  $$\xymatrix{K\ar[r]^{ \kappa } &X\ar[r]_{ p } & Y \ar@/_+2ex/[l]_{s}} $$ 
The arrow $<\kappa,s>:K \coprod Y\to X$ is a cokernel in $\Hfc$.
 \end{prop}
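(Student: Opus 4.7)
The strategy is to reduce the proposition to a surjectivity statement, which can then be settled by the Newman correspondence. Indeed, by Lemma \ref{lm1-bis}, every surjective morphism in $\Hfc$ is a cokernel, so it will be enough to show that $\langle \kappa, s \rangle \colon K\amalg Y \to X$ is surjective as a morphism of Hopf algebras (equivalently, as a linear map, since its image is already a sub-Hopf algebra of $X$).

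To control the image of $\langle \kappa,s\rangle$, the plan is to exploit the identification of $X$ with a semi-direct product provided by Lemma \ref{lm4}. The pointed object $(p,s)$ of $\Pt_Y$ corresponds, under the equivalence $\Pt_Y \simeq Y\text{-}\Hfc$, to the $Y$-Hopf algebra $K = \Hker(p)$ with the action $y\act k = s(y_1)\,k\,s(S_Y(y_2))$, and the proof of Lemma \ref{lm4} furnishes an explicit isomorphism
$$
\mathcal{G}\colon K\# Y \xrightarrow{\ \sim\ } X,\qquad k\#y\mapsto \kappa(k)\,s(y).
$$
In particular, $X$ is generated as a $\kk$-module by the products $\kappa(k)\,s(y)$ with $k\in K$ and $y\in Y$.

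Next, I would combine this with the explicit description of the coproduct $K\amalg Y$ recalled in the first section: it is the free algebra on generators $t_k$ ($k\in K$) and $t_y$ ($y\in Y$) subject to the stated linearity and multiplicativity relations. By the universal property of $\langle \kappa,s\rangle$ one has $\langle\kappa,s\rangle(t_k) = \kappa(k)$ and $\langle\kappa,s\rangle(t_y) = s(y)$; hence $\langle\kappa,s\rangle(t_k\cdot t_y) = \kappa(k)\,s(y)$. Thus the image of $\langle\kappa,s\rangle$ contains a spanning set of $X$, which proves surjectivity, and Lemma \ref{lm1-bis} then concludes that $\langle\kappa,s\rangle$ is a cokernel in $\Hfc$.

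The potentially delicate point is really the identification of the generators under $\mathcal{G}$: one must check that the subalgebra $\kappa(K)\cdot s(Y)$ of $X$ is all of $X$. But this is precisely the content (and the proof) of Lemma \ref{lm4}. So once the semi-direct product equivalence and the Newman correspondence (through Lemma \ref{lm1-bis}) are in hand, Axiom A2 follows with no further computation of substance.
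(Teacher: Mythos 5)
Your proof is correct, but it follows a genuinely different route from the paper. The paper works inside the coproduct: it introduces the action $y\act x=s(y_1)xs(S_Y(y_2))$, forms the subspace $L$ spanned by the elements $t_{y_1\act x}t_{y_2}-t_yt_x$, checks by hand that the two-sided ideal $U=(K\amalg Y)L(K\amalg Y)$ is also a coideal and is stable under the antipode (hence a Hopf ideal), identifies $(K\amalg Y)/U$ with $K\#Y\cong X$ via Lemma \ref{lm4}, and then invokes Lemma \ref{lm1} to see that the projection onto this quotient is a cokernel. You instead reduce everything to surjectivity of $\langle\kappa,s\rangle$ and apply Lemma \ref{lm1-bis}: since the isomorphism $\mathcal G\colon K\#Y\to X$ from the proof of Lemma \ref{lm4} shows that $X$ is linearly spanned by the products $\kappa(k)s(y)$, and these all lie in the image of $\langle\kappa,s\rangle$ (being $\langle\kappa,s\rangle(t_kt_y)$), surjectivity is immediate. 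Both arguments ultimately rest on the same two inputs, the Newman correspondence (through Lemma \ref{lm1} or its corollary Lemma \ref{lm1-bis}) and the semi-direct product decomposition of Lemma \ref{lm4}, and both use the explicit isomorphism $\mathcal G$ rather than just the statement of the equivalence, so you are not assuming more than the paper does. What your version buys is economy: no coideal or antipode computation is needed, and you argue directly about $\langle\kappa,s\rangle$ rather than about a quotient map that is only implicitly identified with it. What the paper's version buys is an explicit presentation of $X$ as the quotient of $K\amalg Y$ by the Hopf ideal of ``commutation relations'' $U$, i.e.\ an explicit description of what is being killed, which your surjectivity argument deliberately avoids. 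One cosmetic remark: surjectivity of a morphism of Hopf algebras simply means surjectivity of the underlying linear map, so your parenthetical about the image being a sub-Hopf algebra is unnecessary (though harmless).
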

\begin{proof}

First remark that if an element    $x\in X$,   is also an element of $K=\Hker(p)$, then, for any $y\in Y$, on also has $s( y_1) x s(S_Y(y_2))\in K$.  It   can be easily check by straightforward  computation that   the formula  $y\act x=s( y_1) x s(S_Y(y_2))$ defines an action of $Y$ over $K$.  

 Now consider the two linear maps 
  $
  f, g: K\ot Y\to K\coprod Y
  $
  defined by $f( x\ot y) = t_{y_1\act x}t_{y_2}$ and $g(x\ot y)= t_yt_x$
  with $y\in Y$ and $x\in K$.  We denote by $L$ the linear image $\Im(f-g)$.

  Note that both  maps preserve the coalgebra structure  so we have  
  
  $$\begin{aligned}
  \D{\circ} (f -g) = \bigl((f-g)\ot f + g\ot (f-g)\bigr) {\circ} \D.
  \end{aligned}
  $$
  This later relation yields that $L$   is a two-sided coideal of $K\coprod Y$. We set  $U= (K\coprod Y)L (K\coprod Y)$ which is both a two-sided  ideal  a  two-sided coideal.

  Moreover, for any $x\in K$ and $y\in Y$, on computes
  
 \begin{eqnarray*}S(  t_{y_1{\act } x}t_{y_2} - t_yt_x)
  &=&  t_{S(y_1)}   t_{y_2{\act } S(x) }  - t_{S(x)}t_{S(y)}\\
  &=& t_{S(y_1)}    t_{y_2{\act } S(x) }t_{y_3} t_{S(y_3)}   - t_{S(y_1)}t_{y_2}t_{S(x)}t_{S(y_3)}\\
   &=& t_{S(y_1)} \left( ( f-g)(S(x)\ot y_2)\right) t_{S(y_3)}
  \end{eqnarray*}
  Notice that  for the first equality, we used   cocomutativity and the relation $S( y{\act } x)= y{\act }S(x)$ which is a consequence of $S^2=\id$. (Remember that 
    the antipode of a cocommutative Hopf algebra is involutive (cf. \cite[Proposition 4.0.1]{Sw})). 
   Our computation proves $S(U)\subset U$ and consequently that $U$ is a Hopf ideal.

  One clearly has 
 $ 
 (K\coprod Y)/U \simeq  K\# Y$ which is isomorphic to $   X$ after the proof of Lemma~\ref{lm4}.
    Moreover, after Lemma~\ref{lm1},    $ (K\coprod Y)\to  (K\coprod Y)/U$ is a cokernel.    
  \end{proof}  
  
\begin{cor}  If the ground field has characteristic zero, 
  the category $\Hfc$ is finitely cocomplete homological. 
\end{cor}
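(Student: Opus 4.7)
The plan is to deduce the corollary by unwinding the definition of a homological category and matching each ingredient to a result already established. Following Borceux--Bourn~\cite{BB}, a homological category is a pointed, Barr-regular, and Bourn-protomodular category; the adjective \emph{finitely cocomplete} simply adds the existence of finite colimits. Theorem~3.1 (Axiom A1) already provides pointedness, finite completeness, and finite cocompleteness in one stroke, so I would only need to verify regularity and protomodularity.

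For Bourn-protomodularity, I would appeal directly to the proposition just proved (Axiom A2). In a pointed finitely complete category, it is standard that the condition ``for every split epimorphism $p\co X\to Y$ with section $s$ and kernel $\kappa\co K\hookrightarrow X$, the canonical morphism $\langle\kappa,s\rangle\co K\amalg Y\to X$ is a strong (in particular, regular) epimorphism'' is equivalent to Bourn's protomodularity. Since A2 asserts that $\langle\kappa,s\rangle$ is even a cokernel, and every cokernel is a regular epimorphism, protomodularity follows at once.

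For Barr-regularity one needs regular-epi/mono factorizations and pullback stability of regular epimorphisms. Factorizations can be produced explicitly: given $f\co A\to B$ in $\Hfc$, the linear image is a sub-Hopf algebra of $B$, and the decomposition $A\twoheadrightarrow \Im(f)\hookrightarrow B$ yields the desired factorization, the left-hand surjection being a cokernel by Lemma~\ref{lm1-bis}. For pullback stability, the key observation to invoke is that in a pointed protomodular category every regular epimorphism coincides with a normal epimorphism, i.e.\ with a cokernel; the required stability is then exactly Axiom A3, furnished in characteristic zero by the theorem of Gran--Kadjo--Vercruysse cited from \cite{GKV}. The only subtle point in the plan is this equivalence between regular epis and cokernels: it rests on the protomodularity established in the previous step and therefore dictates the order of verification. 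Once assembled in this order, the three ingredients (pointedness plus finite cocompleteness from A1, protomodularity from A2, regularity from A3) yield the corollary.
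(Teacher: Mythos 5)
Your proof is correct, but it follows a more explicit route than the paper, whose entire proof of this corollary is a one-line appeal to the Hartl--Loiseau characterization: Axioms A1, A2 and A3 are exactly the conditions shown in \cite{HL} to characterize finitely cocomplete homological categories, so nothing remains to be verified once the preceding results are in place. What you do instead is essentially re-prove the relevant implication of \cite{HL} by hand: you unwind ``homological'' into pointed $+$ Barr-regular $+$ Bourn-protomodular, get pointedness and finite (co)completeness from A1, deduce protomodularity from A2 via the standard equivalence (in a pointed finitely complete category with binary coproducts) between protomodularity and the requirement that $\langle\kappa,s\rangle$ be a strong epimorphism, and obtain pullback-stability of regular epimorphisms by combining A3 with the fact that in a pointed, finitely complete, protomodular category every regular epimorphism is the cokernel of its kernel (cf.\ \cite{BB}); your insistence on establishing protomodularity \emph{before} regularity is exactly the point that makes this work, and it is correctly identified. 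Two small remarks: the explicit regular-epi/mono factorization through the linear image (using Lemma \ref{lm1-bis}) is not strictly needed, since coequalizers of kernel pairs already exist by finite cocompleteness, though it does match the paper's later discussion of images; and, as in the paper, characteristic zero enters only through A3, the rest of your argument being valid over any field. So your proposal buys self-containedness at the cost of redoing general category-theoretic bookkeeping that the paper simply delegates to \cite{HL}.
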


\begin{proof} The category satisfies to Axioms A1, A2 and A3  (cf \cite{HL}).
\end{proof}

\begin{cor}
 If the ground field has characteristic zero,  the category $\Hfc$ is regular.
\end{cor}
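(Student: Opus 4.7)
The plan is to derive the corollary almost immediately from the previous one. The preceding corollary establishes that $\Hfc$ is homological when $\kk$ has characteristic zero, and by the standard definition in Borceux--Bourn~\cite{BB} a homological category is by definition regular (it is pointed, regular and protomodular). So in principle no further work is required beyond invoking this general fact.

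To make the argument transparent, I would unpack regularity as follows. Recall that a category is regular when it is finitely complete, every kernel pair admits a coequalizer, and regular epimorphisms are stable under pullback. Finite limits are supplied by Axiom A1; finite cocompleteness (also A1) guarantees that kernel pairs have coequalizers. The only non-formal ingredient is pullback stability of regular epimorphisms, and for this I would invoke the standard fact that in a pointed protomodular category every regular epimorphism is a normal epimorphism, i.e.\ a cokernel. Since $\Hfc$ is pointed and protomodular as a consequence of A1 and A2 in the Hartl--Loiseau characterization, this identification applies to $\Hfc$; Axiom A3, already established in the previous section, then yields exactly the required pullback stability.

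The main obstacle is essentially bookkeeping rather than mathematical: one needs to trace the passage between the Hartl--Loiseau axiomatization (A1--A3) and the classical formulation of regularity in terms of regular epis and kernel pairs, via the identification of regular epis with cokernels in the protomodular setting. No new computation inside $\Hfc$ is needed beyond the explicit descriptions of limits, colimits, and cokernels already obtained in Sections~2 and~3.
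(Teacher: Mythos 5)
Your argument is correct, and it is in fact more explicit than the proof in the paper on the one point that matters. The paper disposes of this corollary in one line, asserting that the Proposition establishing Axiom A2 (the split-epimorphism/coproduct comparison map being a cokernel), together with the existence of finite limits and coequalizers, fulfills the axioms of a regular category; the pullback-stability of regular epimorphisms is left implicit there, resting on the previous section's theorem that pullbacks of cokernels are cokernels (Axiom A3) and on the tacit identification of regular epimorphisms with cokernels. You take a different and cleaner route: your primary observation, that the preceding corollary already exhibits $\Hfc$ as homological and that a homological category is by definition pointed, regular and protomodular, makes the corollary immediate with no further verification; and your unpacking supplies exactly the ingredient the paper leaves unstated, namely that in a pointed protomodular category every regular epimorphism is a normal epimorphism (a cokernel of its kernel), so that Axiom A3 gives pullback-stability, while A1 provides finite limits and coequalizers of kernel pairs. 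The only caution is to make sure the appeal to protomodularity via A1 and A2 is through the standard characterization (for a pointed finitely complete category with binary coproducts, protomodularity is equivalent to the comparison map $K\amalg Y\to X$ being a strong epimorphism for every split epimorphism), which A2 delivers since cokernels are strong epimorphisms; with that reference in place there is no circularity, and both your shortcut and your detailed verification are valid.
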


\begin{proof} The result of Proposition 5.1 combined with the existence of finite limits and coequilazers fulfills the definitions axioms of regular categories. 
\end{proof}
    \smallskip

 \begin{section}{Images of kernels}
 It remains to check Axiom A.4. As the category $\Hfc$ is regular, the image of a morphism is  canonically defined     as the coequalizer of its kernel pair (see \cite{Barr}). 
  
Let $f: X\to Y$ be a morphism in $\Hfc$. The coequaliser object of  the kernel pair of $f$ is $ \Hcoeq( \pi_1,\pi_2)$ where $\pi_1$ and $\pi_2$ are the canonical maps of the pullback diagram   $$
 \xymatrix{
    X{\invamalg_Y} X \ar[r]^{ \pi_1}\ar[d]_{ \pi_2 } & X \ar[d]^{ f}\\ 
     X  \ar[r]_{f }  & Y} 
 $$
  We have $  X{\invamalg_Y} X = \{x\ot x'\in X\ot X\mid x_1\ot f(x_2)\ot x'= x \ot f(x'_1)\ot x'_2 \}$ and $\pi_1(x\ot x')=  \e(x')x$ and  
  $\pi_2(x\ot x')=  \e(x )x'$.
  
One has   $ \Hcoeq( \pi_1,\pi_2)=  X  /  XJX$  where  $J=\{  \e(x')x- \e(x )x' \mid  x\ot x'\in X\ot_Y X    \}$.
  It is easy to see that $J$ is in fact a Hopf ideal of $X$ so $ XJX= J$.  The image object $   X  /   J $ of $f$ will be  denoted   by   $\HIm(f) $. 
  
 After \cite{BB} the morphism $f$ factorizes  as a product of the regular epimorphism $\pi=\hcoeq( \pi_1,\pi_2)$ and a monomorphism $\iota$. One has the diagram 
 $$
 \xymatrix{
    X\ar[r]^{ f }\ar[d]_{ \pi } & Y \\ 
  \HIm(f) \ar@/_0.2pc/[ru]_{ \iota } } 
 $$
  the morphism $\iota$ being  induced by $f: X\to Y$.
   In our case, in fact, the notion of image and linear image coincide.

 \begin{lm}  If the ground field has characteristic zero,  
 in the category $\Hfc$, for any  morphism  $f$, one has $\HIm(f)\cong \Im(f)$. 
 \end{lm}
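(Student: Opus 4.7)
The plan is to exhibit a concrete factorization of $f$ as a regular epimorphism followed by a monomorphism in $\Hfc$, and then to invoke the uniqueness (up to canonical isomorphism) of such factorizations in the regular category $\Hfc$.

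First, since $f:X\to Y$ is a morphism of Hopf algebras, the linear image $\Im(f)$ is closed under multiplication and unit (because $f$ is an algebra map) and, using $\Delta_Y\circ f=(f\otimes f)\circ\Delta_X$, $\e_Y\circ f=\e_X$ and $S_Y\circ f=f\circ S_X$, also under comultiplication, counit and antipode of $Y$. Hence $\Im(f)$ is a sub-Hopf algebra of $Y$, and is cocommutative as a subcoalgebra of the cocommutative $Y$. Consequently $f$ factors in $\Hfc$ as $X\xrightarrow{\pi}\Im(f)\xrightarrow{j} Y$, where $\pi$ is the surjection induced by $f$ and $j$ is the inclusion.

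Next, I argue that this is a (regular epi, mono) factorization of $f$ in $\Hfc$. By Lemma~\ref{lm1-bis}, the surjective morphism $\pi$ is a cokernel in $\Hfc$, hence a regular epimorphism. The inclusion $j$ is injective on the underlying modules, so it is a monomorphism in $\Hfc$: any pair of parallel morphisms into $\Im(f)$ that become equal after composition with $j$ already agree as linear maps, hence as Hopf algebra maps. Thus $f = j\circ\pi$ is genuinely a factorization of $f$ through a regular epimorphism and a monomorphism.

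Finally, since $\Hfc$ is regular (by the corollary obtained in the previous section), the canonical factorization $f = \iota\circ\hcoeq(\pi_1,\pi_2)$ through $\HIm(f)$ built from the coequalizer of the kernel pair is by definition a (regular epi, mono) factorization of $f$; see \cite{BB}. Uniqueness up to canonical isomorphism of such factorizations in any regular category then yields a Hopf-algebra isomorphism $\HIm(f)\cong\Im(f)$, compatible with both factorizations of $f$ through $Y$. I expect no serious obstacle: the real work was carried out when proving that $\Hfc$ is regular and that every surjection of cocommutative Hopf algebras is a cokernel; the present lemma only unpacks these facts categorically.
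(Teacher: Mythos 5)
Your proof is correct, but it takes a different route from the paper's. The paper keeps the canonical factorization $f=\iota\circ\pi$ through $\HIm(f)$ and observes that $\iota$ factors through the linear image, giving a morphism $\iota'\colon \HIm(f)\to\Im(f)$ which is surjective and still a monomorphism; it then invokes Lemma~\ref{lm3} (monomorphisms in $\Hfc$ are injective) to conclude that $\iota'$ is bijective, hence an isomorphism. You instead build a second (regular epi, mono) factorization $X\twoheadrightarrow\Im(f)\hookrightarrow Y$, using Lemma~\ref{lm1-bis} to see that the surjection onto the linear image is a cokernel (hence a regular epimorphism), and then appeal to the uniqueness up to isomorphism of (regular epi, mono) factorizations in the regular category $\Hfc$. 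Both arguments lean on previously established material: yours trades Lemma~\ref{lm3} for Lemma~\ref{lm1-bis} plus the abstract orthogonality/uniqueness property of the factorization system, which makes it slightly more categorical and arguably cleaner, while the paper's argument is more hands-on and only needs the single fact that monomorphisms are injective. Either way the substantive inputs (regularity of $\Hfc$ in characteristic zero and the Newman-type results of Section~3) are the same, so your proof is a legitimate alternative.
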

 
\begin{proof}
 The following factorization diagram derives from the construction of  $\iota$:
 $$
 \xymatrix{
    X\ar[r]^{ f }\ar[d]_{ \pi } & \Im(f)   \ar[r]^{ \  \subset }  &Y\\ 
  \HIm(f) \ar@/_0.2pc/[ru]_{ \iota' } \ar@/_0.8pc/[rru]_{ \iota }  } 
 $$
where $\iota'$ is  still a monomorphism  but is also surjective.  After Lemma \ref{lm3} a monomorphism is injective.    \end{proof}

We can now      prove  that Axiom  A.4 is fullfilled.
  \begin{prop}  If the ground field has characteristic zero,  
    in $\Hfc$ the image of a  kernel is a  kernel.
 \end{prop}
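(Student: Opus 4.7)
The plan is to reduce Axiom A4 to a purely algebraic statement about normal sub-Hopf algebras. Given a kernel $\kappa : K \hookrightarrow X$ and a cokernel $f : X \to Y$ in $\Hfc$, the preceding lemma identifies the categorical image $\HIm(f \circ \kappa)$ with the linear image $f(K) \subset Y$, which thereby inherits a sub-Hopf algebra structure. Lemma \ref{lm2} asserts that every normal sub-Hopf algebra is a kernel, so it will be enough to show that $f(K)$ is a normal sub-Hopf algebra of $Y$.

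Two preliminary facts feed into the normality verification. First, $K$ is itself normal in $X$: up to the isomorphism provided by $\kappa$, we have $K\cong\Hker(p)$ for the map $p$ whose kernel is $\kappa$, and it was observed in Section 2.1 that $\Hker(p)$ is automatically a normal sub-Hopf algebra of its source. Second, $f$ is surjective, either directly from the explicit description of cokernels in Section 2.2, or as a consequence of Lemma \ref{lm1-bis}. With these two facts at hand, normality of $f(K)$ in $Y$ follows from a one-line computation: for $a = f(k) \in f(K)$ with $k\in K$, and $y = f(x)\in Y$ with $x \in X$ (using surjectivity of $f$), the compatibility of $f$ with comultiplication and antipode yields
$$y_1\, a\, S(y_2) \;=\; f(x_1)\, f(k)\, f(S(x_2)) \;=\; f\bigl(x_1\, k\, S(x_2)\bigr),$$
and normality of $K$ in $X$ ensures $x_1 k S(x_2)\in K$, so that $y_1 a S(y_2) \in f(K)$. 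A final application of Lemma \ref{lm2} to the normal sub-Hopf algebra $f(K)\hookrightarrow Y$ then exhibits it as a kernel in $\Hfc$.

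There is no genuine obstacle here: the conceptual content is entirely packaged into Lemma \ref{lm2}, which converts the categorical problem (showing a morphism is a kernel) into the purely algebraic one (showing a sub-Hopf algebra is normal), after which stability of normality under a surjective morphism of Hopf algebras is immediate. The characteristic zero hypothesis enters only through the preceding lemma, used to identify the categorical image with the linear image; the normality computation itself is characteristic-free.
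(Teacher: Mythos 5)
Your proposal is correct and follows essentially the same route as the paper: identify the categorical image with the linear image (via the preceding lemma), observe that the linear image of the normal sub-Hopf algebra $K$ under the surjective cokernel projection is again normal, and conclude with Lemma \ref{lm2}; you merely spell out the normality computation that the paper leaves implicit. One tiny slip: Lemma \ref{lm1-bis} states that surjective maps are cokernels, not the converse, so surjectivity of $f$ should rest only on the explicit description of cokernels as quotient projections, which you also invoke.
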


\begin{proof} Consider the following commutative diagram in $\Hfc$:
     $$
     \xymatrix{
     &  A\ar[d]^{f}\\
    \Hker(g)\ar[r]^{\ \ \hker(g)}\ar[rd]^{\pi' }\ar[d]^{\pi} &  X \ar[r]^{ g }\ar[d]^{\hcoker(f) } &Z\\ 
 \HIm(\pi')\ar[r]^{\iota \ } &  \Hcoker(f) \\ } 
     $$

     As $ \Hker(g)$ is a normal sub-Hopf algebra of $X$, its linear image,   through the projection  $X\to \Hcoker(f)$ is   a  
      normal sub-Hopf algebra of $\Hcoker(f)$. The later linear image is nothing else than $\HIm(\pi')$. After Lemma \ref{lm2}, it is a kernel object under our assumptions. 
      \end{proof}
     
     At this point, we proved that all axioms A1, A2, A3 and A4 are fulfilled for $\Hfc$ and so one recovers  Theorem 0.1.

 \end{section}

 
 \begin{section}{The abelian core, the categorical semi-abelian product}
 This section is widely inspired by \cite{Bo}. 
In a first time, we determine the abelian core of $\Hfc$. In a second time, we prove that the categorical semi-direct product in $\Hfc$ is nothing else than the semi-direct product defined in section 3.    In all this section, we assume that the ground field has characteristic zero.

\begin{lm}
Let $A$ be a sub-algebra of a cocommutative Hopf algebra $H$. The sub-Hopf algebra is normal if and only if the inclusion $A\to H$ is normal in $\Hfc$.
\end{lm}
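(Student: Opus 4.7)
The statement asserts the equivalence of two notions of normality: the Hopf-algebraic condition $y_1 a S_H(y_2) \in A$ for all $a \in A$, $y \in H$, and the categorical one that the monomorphism $A \hookrightarrow H$ is normal in $\Hfc$ in the sense that it arises (up to isomorphism in $\Hfc$) as the kernel of some morphism. My plan is to establish each direction by invoking a lemma already available in the paper; there is no new combinatorial work to do, only a careful bookkeeping of which lemma plays which role.

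For the forward implication, I would appeal to Lemma \ref{lm2}. Given a sub-Hopf algebra $A$ satisfying the Hopf-algebraic normality condition, that lemma yields the identification $A \cong \Hker\bigl(H \to H/HA^+H\bigr)$, where the quotient map is a well-defined morphism of cocommutative Hopf algebras by Lemma \ref{lm1}. Hence the inclusion $A \hookrightarrow H$ is realised as the kernel in $\Hfc$ of an explicit morphism, so it is a normal monomorphism in the categorical sense.

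For the reverse implication, I would rely on the observation made in Section 2.1, just after the construction of kernels, that for any morphism $f \colon H \to K$ in $\Hfc$ the sub-Hopf algebra $\Hker(f)$ is automatically normal in the Hopf-algebraic sense. This is a short Sweedler-style computation: for $x \in \Hker(f)$ one has $x_1 \otimes f(x_2) = x \otimes 1$, and combining this with the cocommutativity of $H$ and the identity $S \otimes S = \tau \circ \Delta \circ S$ one verifies $y_1 x S_H(y_2) \in \Hker(f)$ for every $y \in H$. Consequently, if the inclusion $A \hookrightarrow H$ is a categorical kernel, say isomorphic in $\Hfc$ to some $\Hker(f) \hookrightarrow H$, then $A$ coincides with a normal sub-Hopf algebra, hence is itself normal.

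The only mild obstacle is the compatibility of the isomorphism $A \cong \Hker(f)$ with the inclusions into $H$: one needs the iso to be an inclusion of sub-Hopf algebras of $H$, not an abstract isomorphism in $\Hfc$. This is automatic because monomorphisms in $\Hfc$ are injective by Lemma \ref{lm3}, so both $A$ and $\Hker(f)$ embed into $H$ as the same linear subspace, and transport of the Hopf-algebraic normality through this equality finishes the argument.
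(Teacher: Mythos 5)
Your proof is correct and follows essentially the same route as the paper: the direction from Hopf-algebraic normality to the inclusion being a kernel is exactly Lemma~\ref{lm2}, and the converse is the observation from Section~2.1 that $\Hker(f)$ is always a normal sub-Hopf algebra, which is precisely how the paper argues. Your additional remark identifying $A$ with $\Hker(f)$ as subobjects of $H$ merely makes explicit a step the paper leaves implicit.
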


\begin{proof}   Consider a normal  map $A\to H$  in $\Hfc$. The sub-object  $A$ is a sub-Hopf algebra of $H$ such as it exists a morphism $\phi: H\to H'$ and $A=\Hker(\phi)$. We already noticed that kernel objects are normal sub-Hopf algebras. 
The converse assertion is     Lemma \ref{lm2}.
\end{proof}

 The following proposition is Theorem 0.2. 
 \begin{prop}
 The full sub-category  of abelian objects of $\Hfc$ is $\Hfcc$. \end{prop}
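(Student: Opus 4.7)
The plan is to use the standard characterization of abelian objects in a semi-abelian category as those objects carrying an internal abelian group structure (equivalently, in a semi-abelian setting, an internal commutative monoid structure); cf.\ \cite{BB,Bo}. In $\Hfc$, with zero object $\kk$ and categorical product $A \otimes A$, such a structure on $A$ amounts to a morphism of cocommutative Hopf algebras $m \colon A \otimes A \to A$ satisfying associativity, commutativity, and the unit axioms $m \circ (\eta_A \otimes \id_A) = \id_A = m \circ (\id_A \otimes \eta_A)$, the categorical diagonal being nothing else than the Hopf comultiplication $\Delta_A$.

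\textbf{First inclusion: $\Hfcc$ is contained in the abelian core.} For $A \in \Hfcc$, I would take $m := \mu_A$. The multiplication $\mu_A \colon A \otimes A \to A$ is an algebra morphism precisely because $A$ is commutative, a coalgebra morphism precisely because $A$ is cocommutative, and is compatible with unit, counit and antipode by routine verification; hence it is a morphism in $\Hfc$. Its associativity, commutativity, and unit properties are the usual algebra axioms, and the inverse law $\mu_A \circ (S_A \otimes \id) \circ \Delta_A = \eta_A \circ \e_A$ is exactly the Hopf antipode axiom; so $\mu_A$ equips $A$ with an internal abelian group structure (with inverse $S_A$, itself a Hopf morphism because $A \in \Hfcc$).

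\textbf{Second inclusion: the abelian core is contained in $\Hfcc$.} This is what I expect to be the main substance of the proposition, and it is essentially an Eckmann--Hilton argument. Suppose $A \in \Hfc$ is abelian, with internal multiplication $m \colon A \otimes A \to A$ a morphism of Hopf algebras satisfying $m(a \otimes 1) = a = m(1 \otimes a)$. In the tensor product algebra $A \otimes A$ we have
\[
(a \otimes 1)(1 \otimes b) \;=\; a \otimes b \;=\; (1 \otimes b)(a \otimes 1),
\]
so applying the algebra morphism $m$ to both sides and using the unit axiom gives $ab = m(a \otimes b) = ba$. Thus $A$ is commutative; being already cocommutative, $A \in \Hfcc$.

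The main potential obstacle is a matter of convention rather than substance: one must be confident that the paper's abelian core of $\Hfc$ really coincides with the full subcategory of internal abelian groups. Once this identification is in hand (which is the standard definition in the semi-abelian setting), both inclusions are short, and the real point is that the tensor product structure on the categorical product $A \otimes A$ is already commutative enough to force commutativity of any internal magma structure on $A$ respecting the unit.
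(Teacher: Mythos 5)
Your proof is correct, but it follows a genuinely different route from the paper. The paper invokes Bourn's criterion (\cite[Theorem 6.9]{Bo}) that an object $C$ of a semi-abelian category is abelian if and only if its diagonal $C\to C\ot C$ (here the comultiplication $\Delta_C$) is a normal monomorphism, reduces this via its Lemma 7.1 to $\Im(\Delta_C)$ being a normal sub-Hopf algebra of $C\ot C$, and then extracts commutativity of $C$ from normality by an explicit computation with the adjoint action (producing $d$ with $\Delta(d)=c_1a_1S(c_2)\ot a_2$ and deducing $ac=ca$). You instead use the equally standard characterization of abelian objects as those admitting an internal abelian group (equivalently, in the strongly unital/semi-abelian setting, internal unital magma) structure, and then run an Eckmann--Hilton argument: since any such $m\colon A\ot A\to A$ in $\Hfc$ is in particular an algebra map satisfying $m(a\ot 1)=a=m(1\ot a)$, the identity $(a\ot 1)(1\ot b)=a\ot b=(1\ot b)(a\ot 1)$ forces $ab=m(a\ot b)=ba$; and conversely for $A\in\Hfcc$ the multiplication $\mu_A$ is a Hopf algebra morphism giving the required structure, with $S_A$ as inverse. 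This is arguably more elementary on the Hopf-algebra side (no normality computation, no Lemma 7.1), at the cost of relying on a different piece of categorical background (abelian $=$ internal abelian group object, as in \cite{BB}) rather than the normal-diagonal criterion of \cite{Bo}; both inputs are standard, so the choice is a matter of taste. One small inaccuracy worth fixing: $\mu_A$ is a coalgebra morphism for \emph{any} bialgebra (this is the bialgebra compatibility axiom), not ``precisely because $A$ is cocommutative''; cocommutativity is only needed so that $A$ and $A\ot A$ are objects of $\Hfc$, while commutativity is what makes $\mu_A$ an algebra morphism, and preservation of the antipode is then automatic for bialgebra morphisms of Hopf algebras.
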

 
\begin{proof}
 We use the characterization of   \cite[Theorem 6.9]{Bo} which states that an object $C$ in a semi-abelian category is abelian if and only if its diagonal $C\to C\ot C$ is normal. In our case, if $C$ is an object of $\Hfc$, the diagonal map is nothing else than the comutiplication $\Delta_C$.  So after Lemma 7.1, it suffices to prove that $C$ is abelian if and only if 
 $\Im(\Delta_C)$ is a normal sub-Hopf algebra of $C\ot C$.
 
 If $C$ is commutative,   so is $C\ot C$ and as  sub-Hopf algebras of a commutative algebra are normal, it follows that $\Im(\Delta_C)$ is. 
 
 On the other hand suppose that $\Im(\Delta_C)$ is a normal sub-Hopf algebra of $C\ot C$. For any two elements $a,c\in C$ it exists $d\in C$ such that we have
 $$
 \Delta(d)= \bigl(c_1\ot 1 \bigr)\bigl(a_1\ot a_2 \bigr)S\bigl(c_2\ot1 \bigr)= c_1a_1S(c_2)\ot a_2.
 $$
 Successively applying $\e_C$ to each tensor factor  of the previous equality, we get 
 $$
d= \e_C(c)a= c_1aS(c_2).
 $$
 As the identity is true for any $a\ot c\in C$, we may apply it to the first tensor factor of $ a\ot c_1\ot c_2$
and get 
$$
\e_C(c_1)a\ot c_2= c_1aS(c_2)\ot c_3 \Longrightarrow  \e_C(c_1)a c_2= c_1aS(c_2)  c_3 \Longrightarrow ac=ca.
$$ Thus, $C$ is commutative. 
\end{proof}

 We retrieve known results: 
 the category $\Hfcc$ is abelian    and the category  $\Hfc$ is not abelian.

To end the article,   we prove that the semi-direct product defined in Section 3 is the semi-direct product in $\Hfc$ in the categorical sense defined in  \cite{Bo}. We will follow the latter reference. 
 
 For any object $Y$ in $\Hfc$ we have a pair of adjoint functors: 
 
 \centerline{
 \begin{minipage}{150pt}
 \begin{eqnarray*}
 \Hker: &  \Pt_Y  \longrightarrow   \Hfc\\
   &\xymatrix{  X\ar[r]_{ p } & Y \ar@/_0.7pc/[l]_{s}}   \mapsto  \Hker(p)\\
  \end{eqnarray*}   \end{minipage}
  \qquad and \qquad
   \begin{minipage}{150pt} 
   \begin{eqnarray*}
  &  \Hfc  \longrightarrow      \Pt_Y \\
   &K   \mapsto     \xymatrix@!0  @C=80pt{   K\coprod Y \ar[r]_{\  (\e_K, \scriptstyle {\rm id}_y) }  & Y \ar@/_0.8pc/[l]_{\ \ \iota_Y}}    \\
  \end{eqnarray*}  
  \end{minipage}}
  
\noindent  where the functor $\Hker$ is monadic. Then one can consider the monad $\mathbb T_Y$ associated to $\Hker$. By definition, the semi-direct product of an algebra $(K,\xi)$  for the monad $\mathbb T_Y$ and the object $Y$ is the domain of the pointed object $ (p,s): \xymatrix{  X\ar[r]_{ p } & Y \ar@/_0.7pc/[l]_{s}} $ corresponding to $(K,\xi)$ via the equivalence $\Pt_Y\cong (\Hfc)^{\mathbb T_Y}$.

  \begin{thm}
Let   $Y$ be an object in $\Hfc$. Let $(K,\xi)$  be an algebra fo the monad $\mathbb T_Y$. The~semi-direct product of an algebra $(K,\xi)$ and     $Y$ is  $K\#Y$.
  \end{thm}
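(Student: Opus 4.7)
The plan is to compose the monadic equivalence $(\Hfc)^{\mathbb{T}_Y} \cong \Pt_Y$ with the equivalence $\Pt_Y \cong Y\text{-}\Hfc$ of Lemma \ref{lm4}, obtaining an equivalence $\Phi : (\Hfc)^{\mathbb{T}_Y} \cong Y\text{-}\Hfc$, and then read off the conclusion from the explicit inverse description given in Lemma \ref{lm4}.

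The key observation is that $\Phi$ is compatible with the forgetful functors to $\Hfc$. On the Eilenberg--Moore side the forgetful functor is $(K,\xi) \mapsto K$; on the $Y$-Hopf algebra side it is $(K,\act) \mapsto K$; and via Beck's comparison both are identified with $\Hker : \Pt_Y \to \Hfc$, which is exactly the object-level description of Lemma \ref{lm4}. Consequently, given an algebra $(K,\xi)$, its image $\Phi(K,\xi)$ is a $Y$-action $\act$ on the \emph{same} Hopf algebra $K$. Applying the inverse direction of Lemma \ref{lm4} to $(K,\act)$ then produces the pointed object $\xymatrix{K\#Y \ar[r]_{\epsilon_K \otimes \id_Y} & Y \ar@/_0.7pc/[l]_{\eta_K \otimes \id_Y}}$, whose domain $K\#Y$ is, by definition, the categorical semi-direct product of $(K,\xi)$ and $Y$.

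The step I expect to demand the most care is verifying that the inverse of the monadic comparison sends $(K,\xi)$ to a pointed object whose kernel is literally $K$, not just isomorphic to it. This follows from Beck's theorem, since the comparison $\Pt_Y \to (\Hfc)^{\mathbb{T}_Y}$ factorizes $\Hker$ through the $\Hfc$-forgetful functor; once this identification is made, Lemma \ref{lm4} delivers the conclusion with no further computation needed.
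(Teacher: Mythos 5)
Your argument is correct and is essentially the paper's own proof made explicit: the paper simply cites the group-case argument of \cite[Section 5]{Bo} with \cite[Proposition 5.7]{Bo} replaced by Lemma \ref{lm4}, and what you write out (composing the monadic comparison $\Pt_Y\simeq(\Hfc)^{\mathbb T_Y}$ with the equivalence of Lemma \ref{lm4}, using compatibility with the forgetful functors to identify $\mathbb T_Y$-algebras with $Y$-actions and then reading off the pointed object $K\#Y$) is precisely that argument. No gap to report.
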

  
  \begin{proof}
  The proof given for the category of groups in \cite[Section 5]{Bo} is still valid in our case if one replaces \cite[Proposition 5.7]{Bo} by Lemma \ref{lm4}.
  \end{proof}

 \end{section}

 \goodbreak

  \bibliographystyle{alpha}
\bibliography{cathopf-bib}

    \end{document}